\newtheorem{theorem}{Theorem}[section]
\newtheorem{lemma}[theorem]{Lemma}
\newtheorem{claim}[theorem]{Claim}
\newtheorem{proposition}[theorem]{Proposition}
\newtheorem{corollary}[theorem]{Corollary}
\theoremstyle{definition}
\newtheorem{definition}[theorem]{Definition}
\newtheorem{example}[theorem]{Example}
\newtheorem{question}[theorem]{Question}
\theoremstyle{remark}
\newtheorem{remark}[theorem]{Remark}
\numberwithin{equation}{section}
\def\fnote#1{\footnote}
\def\real{{\mathbb R}}
\def\ignora#1{}
\def\n3#1{\left\vert  \! \left\vert \! \left\vert \, #1 \, \right\vert \!
  \right\vert \! \right\vert }
\newcommand{\iten}{\ensuremath{\widehat{\otimes}_\varepsilon}}
\newcommand{\pten}{\ensuremath{\widehat{\otimes}_\pi}}
\begin{document}

\title[UASQ and applications to spaces of Lipschitz functions]{Unconditional  almost squareness and applications to spaces of Lipschitz functions}
\author{Luis Garc\'ia-Lirola}\thanks{First author was partially supported by the grants MINECO/FEDER MTM2014-57838-C2-1-P and Fundaci\'on S\'eneca CARM 19368/PI/14 and by the Region de Franche-Comt\'e.}
\address{Departamento de Matem\'aticas, Facultad de Matem\'aticas, Universidad de Murcia,
30100-Murcia, Spain}\email{luiscarlos.garcia@um.es}

\author{ Abraham Rueda Zoca}\thanks{The second author was partially supported by Junta de Andaluc\'ia Grants FQM-0199.}
\address{Universidad de Granada, Facultad de Ciencias.
Departamento de An\'{a}lisis Matem\'{a}tico, 18071-Granada, Spain} \email{ arz0001@correo.ugr.es}

\keywords{Almost squareness; Duality; Lipschitz functions}

\subjclass[2010]{Primary 46B04, 46B10; Secondary 46B28}

\begin{abstract}
We introduce an unconditional concept of almost squareness in order to provide a partial negative answer to the problem of existence of any dual almost square Banach space. We also take advantage of this notion to provide some criterion of non-duality of some subspaces of scalar as well as vector valued Lipschitz functions.
\end{abstract}

\maketitle

\section{Introduction}

The space $Lip(M)$ of Lipschitz functions vanishing at $0$ over a pointed metric space $M$ as well as their canonical preduals $\mathcal F(M)$ have received much attention since Godefroy and Kalton paper \cite{goka}. Indeed, the topological as well as the geometrical structure of such spaces have been deeply analysed since then (see \cite{cdw, god, hlp, ikw, kal, lp} and references therein for background). One of the most traditional problems is trying to determine when $\mathcal F(M)$ is itself a dual Banach space and, in that case, trying to identify some preduals as a subspace of $Lip(M)$. In this line, two candidates for preduals of $\mathcal F(M)$ appeared in the case of being $M$ compact or proper (i.e. closed balls are compact). On the one hand, for the compact case, such a natural candidate is $lip(M)$ (see \cite{wea}), the space of those Lipschitz functions which are uniformly locally flat (see formal definition below). On the other hand, in proper case, the role of a predual of Lipschitz-free Banach spaces is played by $S(M)$ (see \cite{dal2}), which is a subspace of $lip(M)$ whose elements have an additional behaviour of flatness at infinity (again, see formal definition below). It is known that $lip(M)$ (respectively $S(M)$) is an $M$-embedded Banach space whenever $M$ is compact \cite{kal} (respectively $M$ is proper \cite{dal2}) and, consequently, these spaces can not be dual Banach spaces whenever they are infinite dimensional. So, it is natural wondering when previous Banach spaces can be dual ones in a more general setting. For this, it would be useful to find a geometrical property of Banach spaces which is not compatible with being a dual Banach space. In this line, it has been recently introduced the concept of almost squareness. According to \cite{all}, a Banach space $X$ is said to be \textit{almost square} (ASQ) if for every $x_1,\ldots, x_k\in S_X$ and $\varepsilon>0$ there exists $y\in S_X$ such that
$$\Vert x_i\pm y\Vert\leq 1+\varepsilon\ \forall i\in \{1,\ldots, k\}.$$
Roughly speaking, we can say that ASQ Banach spaces have a strong $c_0$ behaviour from a geometrical point of view. This $c_0$ behaviour is encoded by the fact that a Banach space admits an equivalent renorming to be ASQ if, and only if, the space contains an isomorphic copy of $c_0$ \cite{blr2}. In this setting, it could be natural wondering whether this kind of spaces can be dual ones. Even though there are quite a lot examples of ASQ Banach spaces which are not dual (e.g. non-reflexive $M$-embedded Banach space \cite{all}), it is actually an open problem if there exists any dual and ASQ Banach space (posed in \cite{all} as well as in \cite{blr2}).

So, the aim of this note is to find a generalisation of the concept of almost squareness and to apply it to give some criteria on non-duality of $lip(M)$ and $S(M)$ as well as their vector-valued versions. Indeed, in Section \ref{sectionuasq} we will introduce the concept of \textit{unconditionally almost square Banach spaces} (see Definition \ref{definitiouasq}) and, after providing some sufficient conditions on unconditional almost squareness, we will prove the two main results of the Section. On the one hand, we prove in Theorem \ref{uncondASQnodual} that there is not any dual unconditionally almost square Banach space, which partially solves by the negative to the problem of whether there is any dual ASQ space. On the other hand, we prove a stability result for unconditional almost squareness in Proposition \ref{estabiluasqespaope} which, though the proof follows the ideas of \cite[Theorem 2.6]{llr}, it will be applied to analyse unconditional almost squareness in vector-valued versions of $lip(M)$ and $S(M)$ in Section \ref{secvectorvalued}. After that, Sections \ref{seccasonodiscre} and \ref{secunifdisc} are devoted to analyse unconditional almost squareness in $lip(M)$ and $S(M)$, proving for instance in Theorem \ref{discnouniuasq} that previous spaces are unconditionally almost square whenever $M$ is a locally compact and totally disconnected metric space which is not uniformly discrete. Finally, in Section \ref{secvectorvalued} we will get some result on unconditional almost squareness in vector-valued cases. For that, we will introduce the vector-valued version of $lip(M)$ and $S(M)$ ($lip(M,X)$ and $S(M,X)$ respectively) and we will prove in Theorem \ref{scomp} that, given $M$ a proper metric space, $S(M,X)$ is linearly isometrically isomorphic to the space of compact operators from $X^*$ to $S(M)$  which are $w^*-w$ continuous, which extends the results of compact case done in \cite{jsv}. As a consequence, we will get in Theorem \ref{UASQsmvectorvaluado} that $S(M,X)$ is unconditionally almost square whenever $S(M)$ is infinite-dimensional and, consequently, $S(M,X)$ can not be a dual Banach space.\\

\textbf{Notation}. Given $M$ a metric space, $B(x,r)$ (respectively $\overline{B}(x,r)$) denotes the open (respectively closed) ball in $M$ centered at $x\in M$ with radius $r$. In addition, $M'$ will denote the set of cluster points in $M$. Throughout the paper we will only consider real Banach spaces. Given a Banach space $X$, we will denote by $B_X$ (respectively $S_X$) the closed unit ball (respectively the unit sphere) of $X$. We will also denote by $X^*$ the topological dual of $X$. We will denote $X\pten Y$ (resp. $X\iten Y)$ the projective (resp. injective) tensor product of Banach spaces. For a detailed treatment and applications of tensor products, we refer the reader to \cite{rya}. In addition, $L(X,Y)$ (respectively $K(X,Y)$) will denote the space of continuous (respectively compact) operators from $X$ to $Y$. Moreover, given topologies $\tau_1$ on $X$ and $\tau_2$ on $Y$, we will denote by $L_{\tau_1,\tau_2}(X,Y)$ and $K_{\tau_1,\tau_2}(X, Y)$ the respective subspaces of $\tau_1$-$\tau_2$ continuous operators.   

Given $M$ a metric space with a designated origin $0$ and a Banach space $X$, we will denote by $Lip(M,X)$ the Banach space of all $X$-valued Lipschitz functions on $M$ which vanish at $0$ under the standard Lipschitz norm 
$$\Vert f\Vert:=\sup\left\{\left. \frac{\Vert f(x)-f(y)\Vert}{d(x,y)}\ \right/\ x,y\in M, x\neq y \right\} .$$
First of all, note that we can consider every point of $M$ as an origin with no loss of generality, because the resulting Banach spaces turn out to be isometrically isomorphic (see, e.g. \cite{blr}). Moreover, it is known that $Lip(M,X^*)$ is a dual Banach space, and it is defined in \cite{blr} a canonical predual given by
$$\mathcal F(M,X):=\overline{span}\{\delta_{m,x}: m\in M, x\in X\}\subseteq Lip(M,X^*)^*,$$
where $\delta_{m,x}(f):=f(m)(x)$ for every $m\in M, x\in X$ and $f\in Lip(M,X^*)$. Furthermore, it is known that, for every metric space $M$ and every Banach space $X$, $Lip(M,X)=L(\mathcal F(M),X)$ (e.g.\cite{jsv}) and that $\mathcal F(M,X)=\mathcal F(M)\pten X$ (see \cite[Proposition 1.1]{blr}).

We will consider the following spaces of vector-valued Lipschitz functions.  
\begin{align*}
lip(M,X)&:=\left\{f\in Lip(M,X): \exists\ \lim\limits_{\varepsilon\rightarrow 0} \sup\limits_{0<d(x,y)<\varepsilon} \frac{\Vert f(x)-f(y)\Vert}{d(x,y)}=0\right\},\\
S(M,X)&:=\left\{ f\in lip(M,X) : \exists\ \lim\limits_{r\rightarrow \infty} \sup_{\stackrel{x \text{ or } y\notin B(0,r)}{x\neq y}} \frac{\Vert f(x)-f(y)\Vert}{d(x,y)}=0\right\}.
\end{align*}
We will avoid the reference to the Banach space when it is $\mathbb R$ in the above definitions. We finally refer to \cite{hww} for background about theory of $M$-embedded Banach spaces.

\section{An uniform sense of almost square Banach space}\label{sectionuasq}

\bigskip

We shall begin by providing a stronger notion of almost squareness in Banach spaces. 

\begin{definition}\label{definitiouasq}
Let $X$ be a Banach space. We will say that $X$ is \textit{unconditionally almost square (UASQ)} if, for each $\varepsilon>0$, there exists a subset $\{x_\gamma\}_{\gamma\in \Gamma}\subseteq S_X$ (depending on $\varepsilon$) such that

\begin{enumerate}
\item\label{defi211} For each $\{y_1,\ldots, y_k\}\subseteq S_X$ and $\delta>0$ the set
$$\{\gamma\in\Gamma :  \Vert y_i\pm x_\gamma\Vert\leq 1+\delta\ \forall i\in\{1,\ldots, k\}\}$$
is non-empty.
\item\label{defi212} For every $F$ finite subset of $\Gamma$ and every choice of signs $\xi_\gamma\in \{-1,1\}$, $\gamma\in F$, it follows $\Vert\sum_{\gamma\in F} \xi_\gamma x_\gamma\Vert\leq 1+\varepsilon$.
\end{enumerate}
\end{definition}
First of all, we shall exhibit some examples of such spaces:
\begin{example}
\begin{enumerate}
\item The space $c_0(\Gamma)$ is UASQ, where $\{e_\gamma\}_{\gamma\in \Gamma}$ works for every $\varepsilon>0$.
\item Consider $\Gamma$ an uncountable set and consider $\ell_\infty^c(\Gamma):=\{x\in\ell_\infty(\Gamma): supp(x)\mbox{ is countable}\}$. Then $\ell_\infty^c(\Gamma)$ is UASQ, where the set $\{e_\gamma\}_{\gamma\in\Gamma}$ works for every $\varepsilon>0$.
\item Given $\Gamma$ an infinite set and $\mathcal U$ a free ultrafilter over $\Gamma$, the space $X:=\{x\in\ell_\infty(\Gamma):\lim_\mathcal U (x)=0\}$ is UASQ, where the set $\{e_\gamma:\gamma\in \Gamma\}$ works for every $\varepsilon>0$.
\end{enumerate}
\end{example}

Now let us exhibit a general result which will give us a wide class of examples of UASQ spaces.

\begin{proposition}\label{condisufiuasq}
Let $X$ be a Banach space. Assume that there exists a sequence $\{x_n\}$ in $S_X$ such that, for each $\varepsilon>0$ and every $y\in S_X$, there exists $m\in\mathbb N$ such that $n\geq m$ implies $\Vert y\pm x_n\Vert\leq 1+\varepsilon$. Then $X$ is an UASQ Banach space.
\end{proposition}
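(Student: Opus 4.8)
The plan is to fix $\varepsilon>0$ and extract from $\{x_n\}$ a subsequence $\{z_k\}_{k\in\mathbb N}$ which will serve as the family $\{x_\gamma\}_{\gamma\in\Gamma}$ (with $\Gamma=\mathbb N$) witnessing UASQ for that $\varepsilon$. Condition \eqref{defi211} of Definition \ref{definitiouasq} will come essentially for free: given a finite set $\{y_1,\ldots,y_p\}\subseteq S_X$ and $\delta>0$, the hypothesis provides a threshold $m$ with $\Vert y_i\pm x_n\Vert\leq 1+\delta$ for all $i\leq p$ and all $n\geq m$, so as soon as the indices $n_k$ defining the subsequence tend to infinity, the set in \eqref{defi211} is non-empty. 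The real work is to choose the $n_k$ recursively so that every signed partial sum $\sum_{j\in F}\xi_j z_j$ has norm at most $1+\varepsilon$, i.e.\ condition \eqref{defi212}.

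The key elementary estimate I would isolate is this: if $v\in X\setminus\{0\}$, $t=\Vert v\Vert$, $u=v/t$, and $\delta>0$, then for every $n$ with $\Vert u\pm x_n\Vert\leq 1+\delta$ (which holds for all large $n$ by hypothesis) one has $\Vert v\pm x_n\Vert\leq\max\{t,1\}+\delta$. Indeed, if $t\geq 1$ write $v\pm x_n=(u\pm x_n)+(t-1)u$ to get $\Vert v\pm x_n\Vert\leq(1+\delta)+(t-1)=t+\delta$; if $t<1$ write $v\pm x_n=t(u\pm x_n)+(1-t)x_n$ to get $\Vert v\pm x_n\Vert\leq t(1+\delta)+(1-t)=1+t\delta<1+\delta$. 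Since the hypothesis gives a single threshold valid for any prescribed finite set of unit vectors (take the maximum of the individual ones), this estimate applies simultaneously to all $v$ in a given finite subset of $X$, for $n$ large enough.

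With this at hand I would run the recursion. Fix positive scalars $\delta_k$ with $\sum_{k\geq 2}\delta_k\leq\varepsilon$ and set $C_{k}:=1+\sum_{j=2}^{k}\delta_j$, so $1=C_1\leq C_k\leq 1+\varepsilon$. Choose $z_1=x_{n_1}$ arbitrarily. Having chosen $z_1,\ldots,z_{k-1}$ so that every signed sum over a non-empty subset of $\{1,\ldots,k-1\}$ has norm at most $C_{k-1}$, let $V_{k-1}$ be the (finite) set of all such sums, apply the estimate above to the non-zero elements of $V_{k-1}$ with tolerance $\delta_k$ to obtain a threshold $m_k$, and put $n_k=\max\{m_k,n_{k-1}+1\}$, $z_k=x_{n_k}$. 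Now take a non-empty $F\subseteq\{1,\ldots,k\}$ and signs. If $k\notin F$ the corresponding sum lies in $V_{k-1}$, hence has norm $\leq C_{k-1}\leq C_k$. If $k\in F$, the sum equals $v\pm z_k$ where $v$ is a signed sum over $F\setminus\{k\}$: when $v=0$ its norm is $1\leq C_k$, and when $v\neq 0$ (so $v\in V_{k-1}$) its norm is $\leq\max\{\Vert v\Vert,1\}+\delta_k\leq C_{k-1}+\delta_k=C_k$, using $C_{k-1}\geq 1$. By induction all signed partial sums of $\{z_k\}$ have norm $\leq 1+\varepsilon$, which is \eqref{defi212}; and since $n_k\to\infty$, \eqref{defi211} holds as noted above. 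Therefore $X$ is UASQ.

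I expect the only mildly delicate point to be the bookkeeping in the recursion: maintaining the running bound $C_k$ below $1+\varepsilon$ and correctly splitting each new signed partial sum into the cases "$k\notin F$", "$k\in F$ with $v=0$", and "$k\in F$ with $v\neq 0$". The underlying normed-space inequality is a routine convexity argument, and condition \eqref{defi211} is an immediate rereading of the hypothesis, so no further obstacle is anticipated.
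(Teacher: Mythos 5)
Your proof is correct. The overall strategy coincides with the paper's: fix $\varepsilon>0$, extract a subsequence recursively by applying the hypothesis to the (normalizations of the) finitely many signed partial sums built so far, and observe that condition \eqref{defi211} is automatic for any subsequence whose indices tend to infinity. Where you genuinely diverge is in the key estimate controlling condition \eqref{defi212}. The paper invokes Lemma 2.2 of \cite{all} to get the two-sided multiplicative bounds $\prod_{i}(1-\varepsilon_i)<\Vert\sum_i\xi_i x_{\sigma(i)}\Vert<\prod_i(1+\varepsilon_i)$; the lower bound is needed there precisely to control the error introduced by renormalizing each partial sum before feeding it back into the hypothesis. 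You instead prove the elementary convexity inequality $\Vert v\pm x_n\Vert\leq\max\{\Vert v\Vert,1\}+\delta$ whenever $\Vert v/\Vert v\Vert\pm x_n\Vert\leq 1+\delta$, which absorbs the renormalization issue directly and lets you run a purely additive bookkeeping $C_k=1+\sum_{j\le k}\delta_j$ with no lower bounds at all. This makes your argument self-contained (no appeal to \cite{all}) and slightly leaner, at the cost of not recording the lower estimate $\Vert\sum\xi_i z_i\Vert\geq 1-\varepsilon$, which the paper obtains as a by-product but does not use for the UASQ conclusion. Two cosmetic points: in the case $t<1$ the decomposition should read $v\pm x_n=t(u\pm x_n)\pm(1-t)x_n$ (the sign on the last term does not affect the norm, so nothing breaks), and you correctly treat the possibility that a signed sum $v$ over a non-empty $F\setminus\{k\}$ vanishes by cancellation, a degenerate case the paper's normalization step silently excludes via its lower bounds.
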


\begin{proof}
Pick $\{x_n\}$ to be the sequence of the assumptions. It is clear that the condition is equivalent to the fact that given $\{y_1,\ldots, y_k\}\subseteq S_X$ there exists $m\in\mathbb N$ such that $n\geq m$ implies $\Vert y_i\pm x_n\Vert\leq 1+\varepsilon$ for each $i\in\{1,\ldots, k\}$. Moreover, unconditional almost squareness will be proved just taking suitable subsequences of $\{x_n\}$ for every  $\varepsilon$. We will focus on verifying condition (\ref{defi212}) in Definition \ref{definitiouasq} because condition (\ref{defi211}) will be satisfied whenever we are dealing with a subsequence of $\{x_n\}$.

Fix $\varepsilon>0$. Pick a sequence of positive numbers $\{\varepsilon_n\}$ such that $\prod_{n=1}^\infty (1+\varepsilon_n)<1+\varepsilon$ and $\prod_{n=1}^\infty (1-\varepsilon_n)>1-\varepsilon$. Consider $\sigma(1):=1$. From assumptions there exists $\sigma(2)\in \mathbb N\setminus\{1\}$ such that $n\geq \sigma(2)$ implies $\Vert \xi_1x_{\sigma(1)}+\xi_2x_n\Vert<1+\varepsilon_1$ for every $\xi_1,\xi_2\in \{-1,1\}$.  Moreover, by \cite[Lemma 2.2]{all} we can ensure that 
$$1-\varepsilon_1<\Vert \xi_1x_{\sigma(1)}+\xi_2x_{\sigma(2)}\Vert<1+\varepsilon_1\ \ \mbox{for all}\ \xi_1,\xi_2\in \{-1,1\}.$$
Again by assumptions we can find $\sigma(3)\in\mathbb N$ such that $\sigma(3)>\sigma(2)$ and that $n\geq \sigma(3)$ implies $\left\Vert \frac{\xi_1x_{\sigma(1)}+\xi_2x_{\sigma(2)}}{\Vert \xi_1x_{\sigma(1)}+\xi_2x_{\sigma(2)}\Vert}\pm\xi_3 x_n\right\Vert<1+\varepsilon_2$ for all $\xi_1,\xi_2,\xi_3\in \{-1,1\}$. By \cite[Lemma 2.2]{all}, we get that
$$(1-\varepsilon_1)(1-\varepsilon_2)<\left\Vert \sum_{i=1}^3\xi_ix_{\sigma(i)}\right\Vert<(1+\varepsilon_1)(1+\varepsilon_2).$$
By this procedure we can get a subsequence $\{x_{\sigma(n)}\}$ of $\{x_n\}$ such that, given $n\in \mathbb N$ and $\xi_1,\ldots, \xi_n\in \{-1,1\}$, we get
$$1-\varepsilon\leq\prod_{i=1}^{n-1}(1-\varepsilon_i)<\left\Vert \sum_{i=1}^n \xi_i x_{\sigma(i)}\right\Vert<\prod_{i=1}^{n-1}(1+\varepsilon_i)\leq 1+\varepsilon,$$
which proves condition (\ref{defi212}) in definition of UASQ. Moreover, (\ref{defi211}) is clear.\end{proof}

From Proposition \ref{condisufiuasq} we conclude the following Corollary.

\begin{corollary}\label{uasqequiasqsep}
Let $X$ be a separable Banach space. If $X$ is ASQ, then $X$ is UASQ.
\end{corollary}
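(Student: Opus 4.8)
The plan is to deduce this from Proposition \ref{condisufiuasq}: I will build a sequence $\{x_n\}$ in $S_X$ such that for every $\varepsilon>0$ and every $y\in S_X$ there exists $m\in\mathbb N$ with $\Vert y\pm x_n\Vert\leq 1+\varepsilon$ whenever $n\geq m$; granting this, Proposition \ref{condisufiuasq} yields at once that $X$ is UASQ.

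To construct the sequence, I would first use separability of $X$ to fix a sequence $\{y_j\}_{j\in\mathbb N}$ that is dense in $S_X$. Then, for each $n\in\mathbb N$, I would apply the definition of ASQ to the finite family $\{y_1,\dots,y_n\}\subseteq S_X$ with parameter $1/n$ to obtain $x_n\in S_X$ satisfying $\Vert y_i\pm x_n\Vert\leq 1+\tfrac1n$ for every $i\in\{1,\dots,n\}$.

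It then remains to verify the property required by Proposition \ref{condisufiuasq}. Given $\varepsilon>0$ and $y\in S_X$, I would choose $j$ with $\Vert y-y_j\Vert<\varepsilon/2$ and then $m\geq j$ with $1/m<\varepsilon/2$. For $n\geq m$ one has $j\leq n$, so the estimate above applies to $y_j$ and $x_n$, and the triangle inequality gives
\[
\Vert y\pm x_n\Vert\leq \Vert y-y_j\Vert+\Vert y_j\pm x_n\Vert<\frac{\varepsilon}{2}+1+\frac1n\leq 1+\frac{\varepsilon}{2}+\frac1m<1+\varepsilon,
\]
which is precisely what is needed.

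I do not expect any genuine obstacle here: separability enters only to replace $S_X$ by a countable dense set, and the one point to keep track of is that the index $j$ associated with a given $y$ must satisfy $j\leq n$ for all sufficiently large $n$, so that the ASQ inequality selected for $x_n$ can be invoked at the point $y_j$; this is ensured by requiring $m\geq j$ in the choice above.
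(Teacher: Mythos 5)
Your proposal is correct and follows essentially the same route as the paper: construct $x_n$ from the ASQ condition applied to the first $n$ terms of a dense sequence in $S_X$ with parameter $1/n$, then verify the hypothesis of Proposition \ref{condisufiuasq} by a density-plus-triangle-inequality argument. The point you flag about needing $j\leq n$ is exactly the one the paper handles by choosing $m>k$.
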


\begin{proof}
Pick $\{y_n\}$ to be a dense sequence in $S_X$. For each $n\in\mathbb N$ pick, from ASQ condition, an element $x_n\in S_X$ such that
$$\Vert y_i\pm x_n\Vert\leq 1+\frac{1}{n}\ \  \forall i\in \{1,\ldots, n\}.$$
We will prove that $\{x_n\}$ verifies the conditions on Proposition \ref{condisufiuasq}. To this aim pick $\varepsilon>0$ and $z\in S_X$. Now we can find a natural number $k$ such that $\Vert z-y_{k}\Vert<\frac{\varepsilon}{2}$. Pick $m$ large enough to ensure $m>k$ and $\frac{1}{m}<\frac{\varepsilon}{2}$. Given $n\geq m$ one has
$$\Vert z\pm x_n\Vert\leq \Vert z-y_{k}\Vert +\Vert y_{k}\pm x_n\Vert\leq 1+ \frac{\varepsilon}{2}+\frac{1}{n}<1+\varepsilon,$$
which finishes the proof.\end{proof}

Now we will prove the main result of the section, which justifies the definition of UASQ spaces.

\begin{theorem}\label{uncondASQnodual}
Let $X$ be a Banach space. Then $X^*$ can not be UASQ.
\end{theorem}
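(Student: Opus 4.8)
The plan is to argue by contradiction: suppose $X^* $ is UASQ and derive that $X^*$, and hence $X$, must be ``too big'' in a way that contradicts $X^*$ being a dual. More precisely, I would fix $\varepsilon \in (0,1)$ and take the system $\{x_\gamma^*\}_{\gamma\in\Gamma}\subseteq S_{X^*}$ furnished by Definition \ref{definitiouasq}. The key observation is that condition (\ref{defi211}) says this family is ``asymptotically orthogonal'' to every finite subset of $S_{X^*}$ in the squareness sense, while condition (\ref{defi212}) says it is uniformly unconditional with constant close to $1$. The combination of these is what fails for dual spaces, and the natural tool to exploit is weak$^*$-compactness of $B_{X^*}$.

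Here is the mechanism I would use. By weak$^*$-compactness of $B_{X^*}$, the net $(x_\gamma^*)$ (directed suitably, or a subnet) has a weak$^*$-cluster point $x^*\in B_{X^*}$. I would first show that $x^*$ has norm close to $1$: indeed, applying (\ref{defi211}) with $\{y_1\}=\{x^*/\|x^*\|\}$ (if $x^*\neq 0$; the case $x^*=0$ will need separate, easy handling) one finds indices $\gamma$ with $\|x^*/\|x^*\| \pm x_\gamma^*\|\le 1+\delta$, and lower semicontinuity of the norm in the weak$^*$-topology lets me pass to the cluster point to get $\|x^*/\|x^*\| \pm x^*\| \le 1+\delta$, forcing $\|x^*\|$ small --- so in fact the cluster point must be $0$. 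Now I exploit condition (\ref{defi212}): for any finite $F\subseteq\Gamma$, $\|\sum_{\gamma\in F}\xi_\gamma x_\gamma^*\|\le 1+\varepsilon$, which by weak$^*$-lower semicontinuity persists when we replace some of the $x_\gamma^*$ by the cluster point... but the cluster point is $0$, so that gives nothing directly. Instead the right move is: pick finitely many $x_{\gamma_1}^*,\dots,x_{\gamma_n}^*$ one at a time using (\ref{defi211}) applied to the partial sums already chosen (exactly as in the proof of Proposition \ref{condisufiuasq}), so that $\|\sum_{i=1}^n \xi_i x_{\gamma_i}^*\|$ is close to $1$ for all sign choices, and simultaneously each $x_{\gamma_i}^*$ is close (in squareness) to being orthogonal to the earlier ones.

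The contradiction should then come from testing these functionals against a single point of $X$. Choose $x\in S_X$ with $(\sum_{i=1}^n x_{\gamma_i}^*)(x)$ close to $\|\sum_{i=1}^n x_{\gamma_i}^*\|\approx 1$. On the other hand, consider $\sum_{i=1}^n \xi_i x_{\gamma_i}^*(x)$ with the signs chosen as $\xi_i = \operatorname{sign}(x_{\gamma_i}^*(x))$: this equals $\sum_i |x_{\gamma_i}^*(x)|$, which by (\ref{defi212}) is at most $1+\varepsilon$, forcing the individual values $|x_{\gamma_i}^*(x)|$ to be on average small as $n$ grows --- so for large $n$ we cannot have their sum, even without signs, exceed $1+\varepsilon$ while a specific signed sum is near $1$. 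The sharper route, which I expect the authors take, is to use that the $x_{\gamma_i}^*$ are, after the inductive selection, essentially an $\ell_\infty^n$-basis inside $S_{X^*}$ with constant $1+\varepsilon$; dually this would make $X$ contain an $\ell_1^n$ ``almost isometrically'' in a quotient sense, and letting $n\to\infty$ produces a bounded sequence in $B_{X^*}$ with no weak$^*$-convergent behaviour compatible with the unconditionality --- concretely, the family $\{x_\gamma^*\}$ witnesses that $B_{X^*}$ fails to be weak$^*$-sequentially compact in a quantitative way, or more directly that the identity on the closed span behaves like that on $\ell_\infty$, contradicting weak$^*$-compactness combined with the cluster point being $0$.

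The main obstacle, and the step I would spend the most care on, is making the ``cluster point is $0$ but the unconditional sums stay near norm $1$'' tension into an actual contradiction with a precise quantitative estimate. The clean way is: build $x_{\gamma_1}^*,\dots,x_{\gamma_{n}}^*$ inductively via (\ref{defi211}) so that $\|\sum_{i\in A}\xi_i x_{\gamma_i}^*\| \ge 1-\varepsilon$ for every nonempty $A\subseteq\{1,\dots,n\}$ and all signs (using \cite[Lemma 2.2]{all} as in Proposition \ref{condisufiuasq}), then take a weak$^*$-cluster point $u^*$ of the tails; one shows on one hand $\|u^*\|\ge 1-\varepsilon$ by lower semicontinuity applied to norms of single $x_{\gamma_i}^*$ plus the squareness relations (so $u^*\neq 0$), and on the other hand, testing $u^*$ against a norming $x\in S_X$ and using that $u^*$ is a weak$^*$-limit of the $x_{\gamma_i}^*$ together with (\ref{defi212}) for large blocks, one finds $\sum_{i} |x_{\gamma_i}^*(x)|$ stays bounded by $1+\varepsilon$ yet contains infinitely many terms bounded below away from $0$ (those with $x_{\gamma_i}^*(x)$ near $u^*(x)$), which is impossible. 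Fixing $\varepsilon$ small enough at the start ($\varepsilon<\tfrac13$, say) closes the argument.
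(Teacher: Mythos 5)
There is a genuine gap, and it sits exactly where you said you would spend the most care. Your final, ``clean'' argument rests on the claim that a weak$^*$-cluster point $u^*$ of the selected sequence $\{x_{\gamma_i}^*\}$ satisfies $\Vert u^*\Vert\geq 1-\varepsilon$ ``by lower semicontinuity''. Weak$^*$ lower semicontinuity of the norm gives the opposite inequality ($\Vert u^*\Vert\leq\liminf_i\Vert x_{\gamma_i}^*\Vert$), never a lower bound, and in fact your own first paragraph correctly derives that any such cluster point must be $0$ (this is what happens for the $c_0$-basis inside $\ell_\infty=(\ell_1)^*$, which is the model situation). With $u^*=0$ the clause ``infinitely many terms $x_{\gamma_i}^*(x)$ near $u^*(x)$ and bounded below away from $0$'' is vacuous, so no contradiction with $\sum_i\vert x_{\gamma_i}^*(x)\vert\leq 1+\varepsilon$ is produced. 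The intermediate routes you sketch do not close the gap either: $\sum_i\vert x_{\gamma_i}^*(x)\vert\leq 1+\varepsilon$ together with $\sum_i x_{\gamma_i}^*(x)$ close to $1$ is perfectly consistent (one large term, the rest tiny), and ``$X^*$ contains $\ell_\infty^n$'s almost isometrically'' does not contradict being a dual space ($\ell_\infty$ itself does). Note that the restriction of the whole configuration to the closed span of the $x_\gamma^*$'s is consistent with $c_0$, which \emph{is} UASQ; so any successful proof must test condition (\ref{defi211}) against an element \emph{outside} that span whose membership in $X^*$ uses duality. Your proposal never produces such an element.

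The paper's proof supplies precisely this missing object. Embedding $X^*$ isometrically and $w^*$-$w^*$-continuously into $\ell_\infty(S_X)$, condition (\ref{defi212}) gives, coordinatewise, $\sum_{\gamma}\vert x_\gamma(k)\vert\leq 1+\varepsilon$ (a step you essentially do have), so the net of finite partial sums $\sum_{\gamma\in F}x_\gamma$ converges pointwise to some $z\in\ell_\infty(S_X)$ with $\Vert z\Vert\leq 1+\varepsilon$; because the balls of $X^*$ are weak$^*$ closed --- this is where duality enters --- $z$ lies in $X^*$. One then checks $\Vert z\Vert\geq 1-\varepsilon$: at a coordinate $p$ where some $\vert x_{\gamma_0}(p)\vert>1-\delta$, the remaining terms contribute at most about $\varepsilon+\delta$, so $z(p)$ nearly equals $x_{\gamma_0}(p)$. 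The same computation shows that for the $\gamma_0$ furnished by condition (\ref{defi211}) applied to $u=z/\Vert z\Vert$, one gets $\vert u(p)+x_{\gamma_0}(p)\vert$ close to $2$, contradicting $\Vert u\pm x_{\gamma_0}\Vert\leq 1+\delta$. In short: do not take limits of the $x_\gamma$'s themselves (they tend to $0$); take the limit of their unordered sums, which duality keeps inside $X^*$ and which condition (\ref{defi211}) cannot tolerate.
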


\begin{proof}
Assume by contradiction that $X^*$ is UASQ. Note that $X^*$ can be seen as a subspace of $\ell_\infty(I)$, for certain infinite set $I$. Indeed, given $I=S_X$, we have that
\[ \begin{array}{cccc}
\Phi\colon & X^* & \longrightarrow & \ell_\infty(S_X)\\
& x^* & \longmapsto & \Phi(x^*)(x)=x^*(x)
\end{array}\]
defines a linear isometry which is $w^*-w^*$ continuous (indeed, it is not difficult to prove that $\Phi$ is the adjoint of the operator $\Psi:\ell_1(S_X)\longrightarrow X$ defined by $\Psi(f):=\sum_{x\in S_X} f(x)x$ for every $f\in\ell_1(S_X)$).

From here, in order to save notation, we will just simply assume that $X^*$ is a subspace of $\ell_\infty(I)$ whose balls are weak-star closed in $\ell_\infty(I)$.

Pick $\varepsilon>0$ and  consider $\{x_\gamma\}_{\gamma\in \Gamma}$ the set of the definition of UASQ Banach space. Pick $k\in I$ arbitrary and choose, for each $\gamma\in \Gamma$, an element $\xi_\gamma^k\in \{-1,1\}$ such that $\xi_\gamma^k x_\gamma(k)=\vert x_\gamma(k)\vert$. Given $F$ a finite subset of $\Gamma$ one has 
\begin{equation}\label{sumabsolutafami}
1+\varepsilon\geq \left\vert \sum_{\gamma\in F} \xi_\gamma^k x_\gamma(k)\right\vert=\sum_{\gamma\in F} \vert x_\gamma(k)\vert\Rightarrow \exists\ \sum_{\gamma\in \Gamma} \vert x_\gamma(k)\vert\leq 1+\varepsilon\ \forall k\in I.
\end{equation}
Now, for each $k\in I$, we conclude that $\{x_\gamma(k)\}$ is an absolutely summable family of real numbers, so it is a summable one. Define $z(k):=\sum_{\gamma\in \Gamma} x_\gamma(k)$ for each $k\in I$. Obviously $z\in \ell_\infty(I)$ and verifies $\Vert z\Vert\leq 1+\varepsilon$. Moreover, $z\in X^*$ because every ball of $X^*$ is weak-star closed. Moreover, we have the following Claim.

\begin{claim}
$\Vert z\Vert\geq 1-\varepsilon$.
\end{claim}

\begin{proof}
Pick $\delta>0$ and $\gamma_0\in \Gamma$. As $x_{\gamma_0}$ has norm one there exists $k\in I$ such that $\vert x_{\gamma_0}(k)\vert>1-\delta$. Pick a finite subset $F_0\subseteq \Gamma$ such that given a finite subset $F_0\subseteq F$ then $\left\vert z(k)-\sum_{\gamma\in F}x_\gamma(k)\right\vert<\frac{\delta}{2}$. Now
\begin{eqnarray*}
1+\varepsilon&\geq&\vert z(k)\vert\geq \left\vert \sum_{\gamma\in F_0\cup\{\gamma_0\}} x_\gamma(k)\right\vert-\left\vert z(k)-\sum_{\gamma\in F_0\cup\{\gamma_0\}}x_\gamma(k)\right\vert\\
&>&\vert x_{\gamma_0}(k)\vert-\sum_{\gamma\in F_0\setminus\{\gamma_0\}}\vert x_\gamma(k)\vert-\frac{\delta}{2} \mathop{>}\limits^{\mbox{(\ref{sumabsolutafami})}}1-\frac{\delta}{2}-\varepsilon-\frac{\delta}{2}-\frac{\delta}{2}.
\end{eqnarray*}
As $\delta>0$ was arbitrary we conclude that $\Vert z\Vert\geq 1-\varepsilon$.
\end{proof} 
Define $u:=\frac{z}{\Vert z\Vert}$ and pick $\delta>0$. By condition (\ref{defi211}) of definition of unconditional almost squareness we can find $\gamma_0\in \Gamma$ such that
\[ \Vert u\pm x_{\gamma_0}\Vert\leq 1+\delta. \]
Pick $p\in I$ such that $\vert x_{\gamma_0}(p)\vert>1-\delta$. Pick also $F_0$ a finite subset of $\Gamma$ such that given a finite subset $F_0\subseteq F$ then $\left\vert z(p)-\sum_{\gamma\in F} x_\gamma(p)\right\vert<\frac{\delta}{2}$. Now
\begin{eqnarray*}
1+\varepsilon & \geq & \vert u(p)+x_{\gamma_0}(p)\vert=\left\vert \frac{z(p)+x_{\gamma_0}(p)}{\Vert z\Vert}+x_{\gamma_0}(p)-\frac{x_{\gamma_0}(p)}{\Vert z\Vert} \right\vert\\
&\geq & \frac{\vert z(p)+x_{\gamma_0}(p)\vert}{\Vert z\Vert}-\frac{\vert x_{\gamma_0}(p)\vert\ \vert \Vert z\Vert-1\vert}{\Vert z\Vert}>\frac{\vert z(p)+x_{\gamma_0}(p)\vert}{\Vert z\Vert}-\frac{\varepsilon}{1+\varepsilon}.
\end{eqnarray*}
Moreover
\begin{eqnarray*} \vert z(p)+x_{\gamma_0}(p)\vert & \geq & \left\vert \sum_{\gamma\in F_0\cup \{\gamma_0\}}x_\gamma(p)+x_{\gamma_0}(p)\right\vert -\left\vert z(p)-\sum_{\gamma\in F_0\cup\{\gamma_0\}}x_\gamma(p)\right\vert\\
&>&2\vert x_{\gamma_0}(p)\vert-\sum_{\gamma\in F_0\setminus\{\gamma_0\}}\vert x_\gamma(p)\vert-\frac{\delta}{2}\\
&>& 2(1-\delta)-\varepsilon -\delta-\frac{\delta}{2}>2-4\delta-\varepsilon.
\end{eqnarray*} 
Summarising one has
$$1+\varepsilon\geq \vert u(p)+x_{\gamma_0}(p)\vert>\frac{2-4\delta-\varepsilon}{1-\varepsilon}-\frac{\varepsilon}{1+\varepsilon},$$
which obviously does not hold for values of $\varepsilon$ and $\delta$ small enough.

Consequently, there is not any dual and  UASQ Banach space.
\end{proof}

Note that this provides a partial answer to the problem of whether there exists any dual ASQ Banach space posed in \cite{all} and \cite{blr2}. Above Theorem says that the answer is no if we additionally assume unconditional almost squareness. 

Now we will end the section with a stability result on unconditional almost squareness which will be used in Section \ref{secvectorvalued}. However, the proof is an straightforward adaptation of the one given for ASQ Banach spaces in \cite[Theorem 2.6]{llr}.

\begin{proposition}\label{estabiluasqespaope}
Let $X$ and $Y$ be non-zero Banach spaces and assume that $H$ is a subspace of $K(Y^*,X)$ such that $X\otimes Y\subseteq H$. If $X$ is UASQ, so does $H$.
\end{proposition}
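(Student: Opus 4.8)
The plan is to verify the two conditions of Definition \ref{definitiouasq} directly for $H$, using a witnessing family built from the UASQ family of $X$ tensored against a fixed norm-one element of $Y$. Fix $\varepsilon>0$ and let $\{x_\gamma\}_{\gamma\in\Gamma}\subseteq S_X$ be the family given by UASQ of $X$ for $\varepsilon$. Fix any $y_0\in S_Y$ and, using Hahn--Banach, a functional $y_0^*\in S_{Y^*}$ with $y_0^*(y_0)=1$. The natural candidate for the witnessing family in $H$ is $\{T_\gamma\}_{\gamma\in\Gamma}$ where $T_\gamma:=x_\gamma\otimes y_0\in X\otimes Y\subseteq H$, viewed as the rank-one operator $T_\gamma(\psi)=\psi(y_0)\,x_\gamma$ for $\psi\in Y^*$; each $T_\gamma$ has operator norm $\|x_\gamma\|\,\|y_0\|=1$, so $T_\gamma\in S_H$.

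For condition (\ref{defi212}), take a finite $F\subseteq\Gamma$ and signs $\xi_\gamma\in\{-1,1\}$. For $\psi\in B_{Y^*}$ we have $\bigl\|\sum_{\gamma\in F}\xi_\gamma T_\gamma(\psi)\bigr\|=\bigl|\psi(y_0)\bigr|\,\bigl\|\sum_{\gamma\in F}\xi_\gamma x_\gamma\bigr\|\le\bigl\|\sum_{\gamma\in F}\xi_\gamma x_\gamma\bigr\|\le 1+\varepsilon$, so $\bigl\|\sum_{\gamma\in F}\xi_\gamma T_\gamma\bigr\|\le 1+\varepsilon$, which is exactly (\ref{defi212}). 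For condition (\ref{defi211}), take $T_1,\dots,T_k\in S_H$ and $\delta>0$; following \cite[Theorem 2.6]{llr}, the key point is that compactness of the $T_i$ lets us replace the whole unit ball $B_{Y^*}$ by a finite $\eta$-net of its image. Concretely, $\bigcup_i T_i(B_{Y^*})$ is relatively compact in $X$, so choose a finite set $\{\psi_1,\dots,\psi_N\}\subseteq B_{Y^*}$ such that every $T_i(\psi)$, $\psi\in B_{Y^*}$, lies within $\eta$ of some $T_i(\psi_j)$, where $\eta$ is chosen small compared to $\delta$. Apply condition (\ref{defi211}) for the UASQ family of $X$ to the finite set $\{\,T_i(\psi_j)/\|T_i(\psi_j)\|: \|T_i(\psi_j)\|>\eta\,\}\subseteq S_X$ to obtain $\gamma_0\in\Gamma$ with $\|u\pm x_{\gamma_0}\|\le 1+\delta'$ for all these unit vectors $u$. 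Then for arbitrary $\psi\in B_{Y^*}$ and any $i$, write $T_i(\psi)\pm T_{\gamma_0}(\psi)=T_i(\psi)\pm\psi(y_0)x_{\gamma_0}$ and estimate: pick $j$ with $\|T_i(\psi)-T_i(\psi_j)\|<\eta$; if $\|T_i(\psi_j)\|>\eta$ then scale by its norm to bring in the bound $\|u\pm x_{\gamma_0}\|\le 1+\delta'$ and absorb the errors from $|\psi(y_0)|\le 1$, from replacing $\psi$ by $\psi_j$, and from $\|T_i(\psi_j)\|\le 1+\eta$; if $\|T_i(\psi_j)\|\le\eta$ the whole expression has norm at most $2\eta+\eta$ anyway. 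Choosing $\eta,\delta'$ appropriately in terms of $\delta$ gives $\|T_i\pm T_{\gamma_0}\|\le 1+\delta$ for all $i$, so the relevant set of indices is non-empty, establishing (\ref{defi211}). Thus $H$ is UASQ.

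The main obstacle is the bookkeeping in condition (\ref{defi211}): one must carefully propagate the perturbation from a single witness $x_{\gamma_0}$ that works on a finite net of unit vectors in $X$ to a uniform bound over the entire unit ball $B_{Y^*}$, handling the degenerate case where $T_i(\psi_j)$ has very small norm and tracking how the factor $\psi(y_0)\in[-1,1]$ interacts with the normalizations. This is precisely the point where compactness of the operators in $H$ is used, and it is the step where the argument of \cite[Theorem 2.6]{llr} has to be transcribed with the new unconditional witnessing family in place of a single ASQ witness; the unconditionality condition (\ref{defi212}) itself transfers essentially for free because the rank-one structure of $T_\gamma=x_\gamma\otimes y_0$ factors the sum through $\sum_\gamma\xi_\gamma x_\gamma$.
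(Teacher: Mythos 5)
Your proposal is correct and takes essentially the same route as the paper's proof: the witnessing family $T_\gamma=x_\gamma\otimes y_0$, the factorization $\bigl\Vert\sum\xi_\gamma T_\gamma\bigr\Vert=\bigl\Vert\sum\xi_\gamma x_\gamma\bigr\Vert\,\Vert y_0\Vert$ for condition (\ref{defi212}), and, for condition (\ref{defi211}), compactness of $\bigcup_i T_i(B_{Y^*})$ to pass to a finite net together with the two-sided estimate of \cite[Lemma 2.2]{all} to handle the scaling by $\Vert T_i(\psi_j)\Vert$ and by $\psi(y_0)\in[-1,1]$ (the paper packages this step as a claim about the finite-dimensional span of the net points, but the content is the same). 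One cosmetic slip: in the degenerate case $\Vert T_i(\psi_j)\Vert\leq\eta$ the norm of $T_i(\psi)\pm\psi(y_0)x_{\gamma_0}$ is bounded by $2\eta+\vert\psi(y_0)\vert\leq 1+2\eta$, not by $3\eta$, which is still $\leq 1+\delta$ for suitable $\eta$.
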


\begin{proof}
Pick a positive $\varepsilon>0$. As $X$ is UASQ we can find a set $\{x_\gamma\}_{\gamma\in\Gamma}\subseteq S_X$ verifying the conditions of Definition \ref{definitiouasq}. Now, given $y\in S_Y$, for each $\gamma\in \Gamma$ define $T_\gamma:=x_\gamma\otimes y$, which is by assumptions a norm-one element of $H$. We will prove that $\{T_\gamma\}_{\gamma\in \Gamma}$ satisfies the conditions of Definition \ref{definitiouasq}.

On the one hand, pick $F\subseteq \Gamma$ a finite subset. Now, for every $\gamma\in F$ and every $\xi_\gamma\in\{-1,1\}$, one has
$$\left\Vert \sum_{\gamma\in F} \xi_\gamma T_\gamma \right\Vert=\left\Vert \sum_{\gamma\in F} \left(\xi_\gamma x_\gamma\right)\otimes y \right\Vert=\left\Vert \sum_{\gamma\in F} \xi_\gamma x_\gamma\right\Vert \Vert y\Vert\leq 1+\varepsilon.$$
On the other hand, pick $S_1,\ldots, S_k\in S_H$ and $\varepsilon>0$. Consider the relative compact set $K:=\bigcup\limits_{i=1}^k S_i(B_{Y^*})$. Now we can find $x_1,\ldots, x_n\in B_X$ such that $K\subseteq \bigcup\limits_{i=1}^n B\left(x_i,\frac{\delta}{2}\right)$. 

Define $E:=span\{x_i:i\in\{1,\ldots, n\}\}$, which is a finite-dimensional subspace of $X$. Now we will get the following Claim adapting the proof of \cite[Theorem 2.4]{all}.
\begin{claim}\label{claimtensor} There exists $\gamma\in \Gamma$ such that
\[ \left(1-\frac{\delta}{2}\right)\max\{||x||,|\lambda|\} \leq ||x+\lambda x_\gamma|| \leq \left(1+\frac{\delta}{2}\right)\max\{||x||,|\lambda|\} \]
for all $\lambda\in\real$ and $x\in E$. 
\end{claim}
\begin{proof} Let $\{y_i\}$ be a finite $\delta/4$-net in $S_E$ and take $\gamma\in \Gamma$ such that $||y_i\pm x_\gamma||\leq 1+\delta/4$ for every $i$. Thus, $||x\pm x_\gamma||\leq 1+\frac{\delta}{2}$ for every $x\in E$. We also have $||x\pm x_\gamma||\geq 1-\frac{\delta}{2}$.  Now \cite[Lemma 2.2]{all} does the work.
\end{proof}
Taking $\gamma$ as in the Claim, we will prove that $\Vert S_i\pm T_\gamma\Vert\leq 1+\delta$ for every $i\in\{1,\ldots, k\}$. To this aim, pick $i\in\{1,\ldots, k\}$ and $y^*\in B_{Y^*}$. From the condition on $x_1,\ldots, x_n$ we conclude the existence of $j\in\{1,\ldots, n\}$ such that $\Vert S_i(y^*)-x_j\Vert<\frac{\delta}{2}$. Now
$$\Vert (S_i\pm T_\gamma)(y^*)\Vert\leq \Vert S_i(y^*)-x_j\Vert+\Vert x_j\pm y^*(y)x_\gamma\Vert$$
$$\leq \frac{\delta}{2}+\left(1+\frac{\delta}{2} \right)\max\{\Vert x_j\Vert, \vert y^*(y)\vert\}\leq 1+\delta.$$
Now, taking supremum in $y^*\in B_{Y^*}$, we get
$$\Vert S_i\pm T_\gamma\Vert=\sup\limits_{y^*\in B_{Y^*}}\Vert (S_i\pm T_\gamma)(y^*)\Vert\leq 1+\delta.$$
As $\delta$ was arbitrary we conclude that $H$ is UASQ, so we are done.
\end{proof}

\section{Unconditional almost squareness in \texorpdfstring{$lip(M)$}{lip(M)} and \texorpdfstring{$S(M)$}{S(M)}}\label{seccasonodiscre}

\bigskip

In this Section we will provide several examples of little-Lipschitz UASQ Banach spaces. Some of them will have important consequences in the corresponding vector-valued version. 
We shall begin by exhibiting a result for proper spaces.

\begin{proposition}\label{smfindimouasq}
Let $M$ be a pointed proper space. Then $S(M)$ is either finite-dimensional or UASQ. In particular, when $M$ is compact, the same conclusion holds for $lip(M)$.
\end{proposition}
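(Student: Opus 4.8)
The plan is to deduce the statement from Corollary~\ref{uasqequiasqsep}, i.e. to show that whenever $S(M)$ is infinite-dimensional it is a \emph{separable} ASQ Banach space, and hence UASQ. As a preliminary observation I would record that, since $M$ is proper, it is $\sigma$-compact and in particular separable and complete; consequently $\mathcal F(M)$ is separable, because $x\mapsto\delta_x$ embeds $M$ isometrically into $\mathcal F(M)=\overline{span}\{\delta_x:x\in M\}$, so the linear span of $\{\delta_x:x\in D\}$ over a countable dense set $D\subseteq M$ is dense in $\mathcal F(M)$.

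Next I would establish separability of $S(M)$ itself. The key input here is the result of \cite{dal2} that for proper $M$ the space $S(M)$ is a predual of $\mathcal F(M)$, that is, $S(M)^*=\mathcal F(M)$ isometrically. Since a Banach space with separable dual is separable, separability of $\mathcal F(M)$ forces $S(M)$ to be separable.

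For almost squareness I would invoke that $S(M)$ is $M$-embedded whenever $M$ is proper (again \cite{dal2}), together with the fact recalled in the Introduction that an infinite-dimensional $M$-embedded space is never a dual space, hence in particular not reflexive. (Alternatively, non-reflexivity of $S(M)$ in the infinite-dimensional case follows from $S(M)^*=\mathcal F(M)$: were $S(M)$ reflexive, then $\mathcal F(M)$, and hence $Lip(M)=\mathcal F(M)^*$, would be reflexive, which is known to fail for every infinite metric space.) Thus $S(M)$ is a non-reflexive $M$-embedded space, so it is ASQ by \cite{all}; combined with separability, Corollary~\ref{uasqequiasqsep} gives that $S(M)$ is UASQ. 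Finally, when $M$ is compact it is in particular bounded, so the flatness-at-infinity condition in the definition of $S(M)$ inside $lip(M)$ is vacuous and $S(M)=lip(M)$; hence the dichotomy transfers to $lip(M)$ verbatim (one may also simply repeat the argument with $lip(M)$, using that it is $M$-embedded by \cite{kal} and that $lip(M)^*=\mathcal F(M)$ by \cite{wea}).

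I expect the only point requiring any care to be the passage from ``$\dim S(M)=\infty$'' to ``$S(M)$ is not reflexive''; once that is in place the argument is mere bookkeeping with the identity $S(M)^*=\mathcal F(M)$ and the separability of $\mathcal F(M)$, and the reduction to the already-proven Corollary~\ref{uasqequiasqsep} is immediate.
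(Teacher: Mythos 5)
There is a genuine gap, and it sits exactly at the point you flagged as ``the only point requiring any care''. Your argument leans twice on the identification $S(M)^*=\mathcal F(M)$ for \emph{every} proper $M$ (once for separability of $S(M)$, once for non-reflexivity), but this is not what \cite{dal2} proves: there the duality $S(M)^*=\mathcal F(M)$ is obtained only under additional hypotheses (e.g.\ $M$ proper and countable, proper ultrametric, or $S(M)$ separating points uniformly). The paper itself treats ``$\mathcal F(M)=S(M)^*$'' as an extra assumption in the Remark immediately following this Proposition, so it cannot be used as a free standing fact here. Your fallback argument for non-reflexivity is also flawed: you claim an infinite-dimensional $M$-embedded space is never a dual space, hence not reflexive, but every reflexive space is trivially $M$-embedded (as $X^\perp=\{0\}$ in $X^{***}$), so the correct statement is only that \emph{non-reflexive} $M$-embedded spaces are not dual spaces --- which is useless for \emph{deriving} non-reflexivity. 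In short, neither of your two routes to ``infinite-dimensional $\Rightarrow$ non-reflexive'' survives, and the separability step is likewise unsupported.

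The paper's proof avoids all of this by invoking \cite[Lemma 3.9]{dal2}: for proper $M$ the space $S(M)$ is $(1+\varepsilon)$-isometric to a subspace of $c_0$ for every $\varepsilon>0$. This single fact delivers everything you need at once: separability of $S(M)$; non-reflexivity when $S(M)$ is infinite-dimensional (an infinite-dimensional subspace of $c_0$ is never reflexive); and $M$-embeddedness (subspaces of $c_0$ are $M$-embedded, and this property passes to spaces at Banach--Mazur distance $1$). From there your intended conclusion goes through as you planned: non-reflexive $M$-embedded spaces are ASQ by \cite[Corollary 4.3]{all}, and separable ASQ spaces are UASQ by Corollary~\ref{uasqequiasqsep}. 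Your reduction of the compact case via $S(M)=lip(M)$ is fine. So the overall architecture of your proof matches the paper's, but you need to replace the duality-based justifications with the embedding into $c_0$.
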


\begin{proof}
By \cite[Lemma 3.9]{dal2} it is known that $S(M)$ is $(1+\varepsilon)$-isometric to a subspace of $c_0$. Then, either $S(M)$ is finite dimensional or $S(M)$ is not reflexive. In the non-reflexive case, we get that $S(M)$ is $(1+\varepsilon)$-isometric to an $M$-embedded Banach space, so $S(M)$ is a non-reflexive $M$-embedded space. By \cite[Corollary 4.3]{all}, $S(M)$ is ASQ. As $S(M)$ is separable then it is UASQ by Corollary \ref{uasqequiasqsep}. 
\end{proof}

\begin{remark} Recall that a Banach space $X$ is said to be \textit{octahedral} if given $Y$ a finite dimensional subspace of $X$ and a positive $\varepsilon$ there is $x\in S_X$ such that $\Vert y+\lambda x\Vert\geq (1-\varepsilon)(\Vert y\Vert+\vert \lambda\vert)$ for every $y\in Y$ and every $\lambda\in\mathbb R$. In \cite{blr} the question of whether $\mathcal F(M,X)$ has an octahedral norm is analysed under the strong assumption of having the pair $(M,X^*)$ the so-called the contraction-extension property. Above Proposition gives us another criterion about octahedrality in $\mathcal F(M,X)$ which avoids dealing with the contraction-extension property. Indeed, consider $M$ be a proper pointed metric space and $X$ be a Banach space. Assume that $\mathcal F(M)=S(M)^*$ and that $\mathcal F(M)$ has the approximation property (see examples in \cite{dal2}). Then Proposition \ref{smfindimouasq} implies that $\mathcal F(M,X^*)=\mathcal F(M)\pten X^*$ has an octahedral norm by \cite[Corollary 2.9]{llr}.
\end{remark}

In view of Proposition \ref{smfindimouasq} it is natural to wonder whether $lip(M)$ or $S(M)$ is an UASQ Banach space. Next Proposition yields positive partial answers.

\begin{proposition}\label{puntosaclema}
Let $M$ be a pointed metric space and let $W$ be a closed subspace of $lip(M)$. Assume that there exist sequences $\{f_n\}\subset S_W$, $\{x_n\}\subset M$ and $\{r_n\}$ of positive numbers such that $r_n\rightarrow 0$, $f_n(x_n)=0$ and $f_n(t)=0$ for each $t\in M\setminus B(x_n,r_n)$. Then $W$ is an UASQ Banach space.
\end{proposition}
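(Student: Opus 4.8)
The plan is to verify the two conditions of Definition \ref{definitiouasq} directly, using the sequence $\{f_n\}$ provided by the hypothesis, or rather a suitable subsequence of it selected according to the given $\varepsilon > 0$. The key structural observation is that each $f_n$ is supported on the small ball $B(x_n, r_n)$ with $r_n \to 0$, and this combined with the little-Lipschitz (uniform local flatness) condition should force the $f_n$ to behave like the unit vector basis of $c_0$. In fact, I would first try to prove the cleaner statement that $\{f_n\}$ satisfies the hypothesis of Proposition \ref{condisufiuasq}: namely, that for every $g \in S_W$ and every $\eta > 0$, one has $\|g \pm f_n\| \le 1 + \eta$ for all sufficiently large $n$. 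If this works, UASQ follows immediately from Proposition \ref{condisufiuasq}, and condition (\ref{defi212}) is then automatic.

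To estimate $\|g \pm f_n\|$, I would bound the Lipschitz quotient $\frac{|(g \pm f_n)(s) - (g \pm f_n)(t)|}{d(s,t)}$ for a pair $s \ne t$ in $M$. Split into cases according to the location of $s, t$ relative to $B(x_n, r_n)$. If neither $s$ nor $t$ lies in $B(x_n, r_n)$, then $f_n(s) = f_n(t) = 0$, so the quotient is just $\frac{|g(s)-g(t)|}{d(s,t)} \le \|g\| = 1$. If both $s$ and $t$ lie in $B(x_n, r_n)$, then $d(s,t) < 2r_n$, and since $g \in lip(M)$ is uniformly locally flat, the quotient $\frac{|g(s)-g(t)|}{d(s,t)}$ is at most $\frac{\eta}{2}$ once $2r_n$ is below the appropriate threshold; meanwhile $\frac{|f_n(s)-f_n(t)|}{d(s,t)} \le 1$, so the total is at most $1 + \frac{\eta}{2}$. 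The delicate case is when exactly one of the points, say $s$, lies in $B(x_n, r_n)$ and $t$ does not; then $f_n(t) = 0$ but $f_n(s)$ need not be. Here I would use $f_n(x_n) = 0$ together with the little-Lipschitz flatness of $f_n$ to control $|f_n(s)| = |f_n(s) - f_n(x_n)|$ by a small multiple of $d(s, x_n) < r_n$, and then exploit that $d(s,t)$ is bounded below by $d(t, x_n) - d(s, x_n)$; a short computation should show that if $d(s,t)$ is comparable to or larger than $r_n$ the contribution of $f_n$ is $O(r_n)$-small relative to $d(s,t)$, while if $d(s,t)$ is small the argument of the "both in the ball" case applies after comparing $s$ and $t$ through points near $x_n$. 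One must be slightly careful, since $\|f_n\| = 1$ means $f_n$ is not flat at every scale, only at small scales; the point is that the relevant scale here is $\le r_n \to 0$, so uniform local flatness of $f_n$ (which is what membership in $lip(M)$ gives, uniformly over $n$? — here one only knows each $f_n$ individually lies in $lip(M)$) does the job.

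I expect the main obstacle to be precisely this mixed case, and in particular making sure the flatness estimates are applied at a scale that is genuinely controlled by $r_n$ rather than requiring uniform flatness across the whole sequence $\{f_n\}$. A clean way around it: since $f_n(t) = 0$ for $t \notin B(x_n, r_n)$ and $f_n(x_n) = 0$, for any $s \in B(x_n,r_n)$ we have both $|f_n(s)| \le \|f_n\| \, d(s,x_n) \le r_n$ and, for $t \notin B(x_n,r_n)$, $|(g\pm f_n)(s) - (g\pm f_n)(t)| \le |g(s)-g(t)| + |f_n(s)| \le d(s,t) + r_n$, while $d(s,t) \ge d(x_n,t) - d(x_n,s) \ge r_n - d(x_n,s)$; if additionally $d(s,t) \ge \sqrt{r_n}$ the ratio is $\le 1 + \sqrt{r_n}$, and if $d(s,t) < \sqrt{r_n}$ then both $s$ and $t$ lie within distance $2\sqrt{r_n}$ of each other and one falls back on local flatness of $g$ and the triangle inequality through $x_n$ for $f_n$. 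Once $\|g \pm f_n\| \le 1 + \eta$ is established for all large $n$, the lower bound $\|g \pm f_n\| \ge 1 - \eta$ comes for free (e.g. as in \cite[Lemma 2.2]{all}), and Proposition \ref{condisufiuasq} finishes the proof. If it turns out that the hypothesis of Proposition \ref{condisufiuasq} cannot be met in full generality, the fallback is to verify Definition \ref{definitiouasq} directly by choosing, for each $\varepsilon$, a rapidly decreasing subsequence $\{f_{\sigma(n)}\}$ with $r_{\sigma(n)}$ shrinking fast enough that the supports are "asymptotically disjoint" in a quantitative sense, and then running the same telescoping product argument as in the proof of Proposition \ref{condisufiuasq} to get condition (\ref{defi212}).
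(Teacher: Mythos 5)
Your proposal is correct and follows essentially the same route as the paper: both reduce to Proposition \ref{condisufiuasq} and estimate $\Vert g\pm f_n\Vert$ by a case analysis on the positions of the two points, using the local flatness of $g$ when they are close and the bound $\vert f_n(s)\vert\leq d(s,x_n)<r_n$ (from $f_n(x_n)=0$) when they are far apart. The only difference is cosmetic: the paper splits cases with respect to the larger ball $B(x_n,\delta)$, where $\delta$ is the flatness threshold of $g$, while you split on the support ball $B(x_n,r_n)$ and then compare $d(s,t)$ with $\sqrt{r_n}$ in the mixed case; both versions close the argument.
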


\begin{proof}
We will prove that $W$ verifies the assumptions of Proposition \ref{condisufiuasq} for the sequence $\{f_n\}$. To this aim pick $\varepsilon>0$ and $g\in S_{W}$. We can find, by little Lipschitz condition, a positive $\delta>0$ such that 
\begin{equation}\label{litlipconditheo22}
0<d(x,y)<2\delta \Rightarrow \frac{\vert g(x)-g(y)\vert}{d(x,y)}<\varepsilon.
\end{equation}
Pick $m\in\mathbb N$ such that $n\geq m$ implies $r_n<\delta$ and $\frac{r_n}{\delta-r_n}<\varepsilon$.

Consider $n\geq m$ and let us estimate $\Vert g\pm f_n\Vert$. To this aim pick $x,y\in M, x\neq y$. One has
$$\frac{\vert g(x)\pm f_n(x)-(g(y)\pm f_n(y)) \vert}{d(x,y)}\leq \underbrace{\frac{\vert g(x)-g(y)\vert}{d(x,y)}}_A+\underbrace{\frac{\vert f_n(x)-f_n(y)\vert}{d(x,y)}}_B:=C$$
Now we shall distinguish cases, depending on the position of $x$ and $y$.
\begin{enumerate}
\item $x,y\notin B(x_n,\delta)$. In this case $B=0$ and, consequently, $C\leq 1$.
\item $x,y\in B(x_n,\delta)$. In this case $A\leq \varepsilon$ because of (\ref{litlipconditheo22}), so $C\leq 1+\varepsilon$.
\item $x\in B(x_n,\delta), y\notin B(x_n,\delta)$. In this case $f_n(y)=0$. We can even distinguish two more cases here:
\begin{enumerate}
\item $x\notin B(x_n,r_n)$. Here $f_n(x)=f_n(y)=0$, so $B=0$ and thus $C\leq 1$.
\item $x\in B(x_n,r_n)$. In this case, by triangle inequality, $d(x,y)>\delta-r_n$. Consequently 
$$B\leq \frac{\vert f_n(x)\vert}{d(x,y)}\leq \frac{r_n}{\delta-r_n}<\varepsilon,$$
so $C\leq 1+\varepsilon$ in this case.
\end{enumerate}
\end{enumerate}

Therefore, taking supremum in $x\neq y$ we get that $\Vert g \pm f_n\Vert\leq 1+\varepsilon$, as desired. Thus Proposition \ref{condisufiuasq} implies that $W$ is UASQ. 
\end{proof}

Now we will provide examples of metric spaces $M$ for which above Proposition applies under some topological assumptions on $M$.

\begin{proposition}\label{puntosactotdis}
Let $M$ be a locally compact pointed metric space. Assume that $0\in M'$. Assume also that either $M$ is totally disconnected or $S(M)$ separates the points uniformly (i.e. there exists a constant $c\geq 1$ such that for every $x, y\in M$ there is $f\in S(M)$ satisfying $||f||\leq c$ and $|f(x)-f(y)|=d(x, y)$). Then both $lip(M)$ and  $S(M)$ are UASQ Banach spaces.
\end{proposition}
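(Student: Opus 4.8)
The plan is to deduce the statement from Proposition~\ref{puntosaclema}, applied in turn to the closed subspaces $W=S(M)$ and $W=lip(M)$ of $lip(M)$. Since $S(M)$ is an isometric subspace of $lip(M)$, a single sequence $\{f_n\}$ serves for both, so it is enough to produce the sequences of Proposition~\ref{puntosaclema} with $W=S(M)$. As $0\in M'$, every ball $B(0,r)$ is infinite, so one may take $x_n=0$ for all $n$, and the problem reduces to this: for each small $\rho>0$, build a norm-one $f\in S(M)$ with $f(0)=0$ that vanishes off $B(0,\rho)$. Feeding a sequence $\rho=\rho_n\downarrow 0$ into the construction then yields $\{f_n\}$, $x_n=0$ and $r_n=\rho_n\to 0$, and Proposition~\ref{puntosaclema} gives that $S(M)$ and $lip(M)$ are UASQ. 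I would also record the elementary remark that a uniformly locally flat Lipschitz function with bounded support automatically belongs to $S(M)$ --- for $r$ large the oscillation quotient over pairs leaving $B(0,r)$ is at most $\Vert f\Vert_\infty/(r-R)$, where $R$ bounds the support --- so it suffices to construct a boundedly supported, uniformly locally flat unit bump vanishing at $0$.

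The totally disconnected case is easy. A locally compact totally disconnected metric space has a base of compact open sets; so, choosing $p$ with $0<d(0,p)<\rho/2$ and a compact open $V$ with $p\in V\subseteq B(p,\min\{\rho/2,\,d(0,p)/2\})$, one has $0\notin V$ and $V\subseteq B(0,\rho)$. Since $V$ is compact and $M\setminus V$ is closed and disjoint from it, $\eta:=d(V,M\setminus V)>0$, and $f:=\eta\,\mathbf{1}_V$ (with $\mathbf{1}_V$ the indicator of $V$) has Lipschitz norm exactly $1$, vanishes at $0$, vanishes off $B(0,\rho)$, and is uniformly locally flat because $\mathbf{1}_V$ is constant on every ball of radius $\eta$. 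Letting $V=V_n$ shrink finishes this case.

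The uniform separation case is where the genuine difficulty lies: there is no uniformly locally flat spatial cut-off, so one cannot localise a separating function by multiplying it by something like $\max\{0,\rho-d(\cdot,0)\}$, whose product would inherit the bad local behaviour. My way around this is to borrow compactness. For $\rho$ small, $K:=\overline B(0,\rho)$ is compact by local compactness. The set $A$ of all uniformly locally flat real functions on $K$ is a unital subalgebra of $C(K)$, and it separates the points of $K$ (restrict the uniform separators to $K$), so $A$ is dense in $C(K)$ by Stone--Weierstrass. Choosing $b\in K$ with $0<d(0,b)<\rho/4$, taking by Urysohn a $u\in C(K)$ with $u(b)=1$, $0\le u\le 1$ and $u\equiv 0$ on $\{0\}\cup(K\setminus B(b,\rho/4))$, and picking $h\in A$ with $\Vert h-u\Vert_\infty<1/4$, the truncation $\tilde h:=\max\{0,h-1/4\}$ lies in $A$, has $\tilde h(b)>0$, $\tilde h(0)=0$, and vanishes off $B(b,\rho/4)$, hence off $B(0,\rho/2)$. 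Normalising $\tilde h$ in the Lipschitz norm and extending it by $0$ to $M$ gives the desired $f$: since its support lies well inside $B(0,\rho)$, the zero extension neither raises the Lipschitz norm (pairs straddling $\partial B(0,\rho)$ give a quotient $<1$) nor destroys uniform local flatness (the support has positive distance to $M\setminus B(0,\rho)$), and $f$ has bounded support, so $f\in S(M)$. Letting $\rho=\rho_n\downarrow 0$ and invoking Proposition~\ref{puntosaclema} completes the proof for $lip(M)$ and $S(M)$ alike. The main obstacle, as signalled, is precisely this construction of a genuinely little-Lipschitz bump under only the uniform separation hypothesis; the remaining verifications are routine.
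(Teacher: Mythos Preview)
Your argument is correct. In both cases you reduce, as the paper does, to building a norm-one little-Lipschitz bump supported in an arbitrarily small ball around the cluster point $0$ and then invoke Proposition~\ref{puntosaclema}. Your totally disconnected construction is essentially the paper's (if anything slightly cleaner: a single compact open $V$ not containing $0$ instead of the paper's pair of clopens $K\supset C$).

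The real difference is in the uniform-separation case. The paper defines a trivially little-Lipschitz function on the closed set $N=\{0,t\}\cup\bigl(\overline B(0,\varepsilon)\setminus B(0,\eta)\bigr)$ and then invokes Weaver's extension theorem \cite[Theorem~3.2.6]{wea} (which needs that $lip$ of the compact ball separates points uniformly, inherited from the hypothesis on $S(M)$) to extend it to a function in $lip(\overline B(0,\varepsilon))$, and finally extends by $0$. You avoid this black box entirely: on the compact $K=\overline B(0,\rho)$ you note that the little-Lipschitz functions form a unital point-separating subalgebra of $C(K)$, so Stone--Weierstrass gives density, and a sup-norm approximation of a Urysohn bump followed by the truncation $\max\{0,h-1/4\}$ (which stays in the class since $|\max\{0,a\}-\max\{0,b\}|\le|a-b|$) produces a genuine $lip$-bump with the right support. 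This is a more elementary and self-contained route; it even shows that mere point-separation by $S(M)$ (rather than uniform separation) would already suffice for the conclusion. The paper's route is shorter once one is willing to quote Weaver's extension theorem, and yields a bump extending a prescribed function, but for the purpose at hand your approach is at least as good.
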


\begin{proof}
It is enough to prove, in view of above Proposition, that for every $\delta>0$ there exists $f\in S_{lip(M)}$ such that $f(t)=0$ for every $t\in M\setminus B(0,\delta)$ (note that, in this case, $f$ trivially belongs to $S(M)$).

First of all, assume that $M$ is totally disconnected. In  that case, $M$ has a basis formed by clopen sets because $M$ is a totally disconnected and locally compact metric space \cite[Proposition 2.6.8]{coo}. Pick a positive number $\delta>0$.

Note that, by assumptions, we can consider $K$ to be a neighbourhood of zero which is both clopen and compact (closed subsets of compact topological spaces are compact) and which is contained in $B(0,\delta)$. As $0\in M'$ we can find $t\in K\setminus \{0\}$. As before we can find $C$ to be a clopen neighbourhood of $0$ not containing $t$. Now define the following function
\[ h(x):=\begin{cases}
1 & \mbox{if } x\in K\setminus C\\
0 & \mbox{otherwise}
\end{cases}. \]
Now we have that $h(0)=0$ and $h(x)=0$ for every $x\in M\setminus K$. Let us prove that $h\in lip(M)$. To this aim define $A_1:=C, A_2:= K\setminus C, A_3:=M\setminus K$. Then $A_1$ and $A_2$ are compact sets (are closed subsets of a compact one) and $A_3$ is closed. Consequently, $\alpha:=\min\limits_{i\neq j}dist(A_i,A_j)>0$. Therefore, given $x,y\in M, x\neq y$, one has
\[\frac{\vert h(x)-h(y)\vert}{d(x,y)}=\begin{cases}
\frac{1}{d(x,y)}\leq \frac{1}{\alpha} & \mbox{if } x\in A_2, y\notin A_2\\
0 & \mbox{otherwise}
\end{cases},\]
which proves that $h\in Lip(M)$. Moreover, given $0<d(x,y)<\alpha$ then there exists $i$ such that both $x,y\in A_i$. Consequently
$$\frac{\vert h(x)-h(y)\vert}{d(x,y)}=0.$$
This proves that $h\in lip(M)$. So $f:=\frac{h}{\Vert h\Vert}$ yields the desired function.

Now assume that $S(M)$ separates the points of $M$ uniformly. In that case, we can find $\varepsilon<\delta$ such that $\overline{B}(0,\varepsilon)$ is compact. Consider $0<\eta<\varepsilon$ and pick $t\in B(0,\eta)\setminus\{0\}$. Define the following function on $N:=\{0,t\}\cup \overline{B}(0,\varepsilon)\setminus B(0,\eta)$:
\[ h(x):= \begin{cases}
     1 & \mbox{if }x=t \\
     0 & \mbox{otherwise}
\end{cases} \]
It is quite obvious that $h$ is a little Lipschitz function on $N$, which is a closed subset of the compact one given by $\overline{B}(0,\varepsilon)$. By \cite[Theorem 3.2.6]{wea} then there exists $g\in lip(\overline{B}(0,\varepsilon))$ an extension of $h$. Now $g$ is a non-zero function which vanishes out of $B(0,\eta)$. Consequently, if we define
\[ f(t):=\begin{cases}
g(t) & \mbox{if } t\in \overline{B}(0,\varepsilon)\\
0 & \mbox{otherwise,}
\end{cases}.\]
it is quite clear that $f\in lip(M)$. Furthermore, $f\neq 0$ as $f(x)\neq 0$. So $\frac{f}{\Vert f\Vert}$ is the desired function.
\end{proof}

\begin{remark}\label{remarkpuntosac} 
According to \cite{dal2}, a metric space $M$ is said to be \textit{ultrametric} if for every $x,y,z\in M$ it follows $d(x,z)\leq \max\{d(x,y),d(y,z)\}$. Note that construction involving the proof of Proposition \ref{puntosactotdis} still works for the class of metric spaces having a cluster point which are bi-Lipschitz equivalent to an ultrametric space. In fact, given such a metric space $M$, there exists a positive constant $C>0$ such that for each $x\in M$ and $R>0$ we can find a closed set $A$ such that $A\subseteq B(x,R)$, $B(x,C^{-1}R)\subseteq A$ and $dist(A,M\setminus A)\geq C^{-1}R$ \cite[Proposition 15.7]{dase}. We thank to Michal Doucha for addressing us to this Remark.
\end{remark}

First part of Proposition \ref{puntosactotdis} can be see as a part of a more general result.

\begin{theorem}\label{discnouniuasq}

Let $M$ be a locally compact and totally disconnected pointed metric space. If $M$ is not uniformly discrete then both $lip(M)$ and $S(M)$ are UASQ.

\end{theorem}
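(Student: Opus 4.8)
The goal is to reduce the statement to the situation covered by Proposition~\ref{puntosaclema}: it suffices to produce, for each $\delta>0$, a sequence-free single function $f\in S_{lip(M)}$ supported in a ball $B(x,\delta)$ with $f(x)=0$, around suitable base points $x_n$ with radii $r_n\to 0$. The new difficulty compared with Proposition~\ref{puntosactotdis} is that we no longer assume $0\in M'$, so we cannot always localise near the origin; instead, failure of uniform discreteness provides us, for each $n\in\mathbb N$, a pair of distinct points $u_n,v_n\in M$ with $0<d(u_n,v_n)<1/n$. The plan is to use these pairs as the loci where the bump functions live.

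First I would fix $n$ and set $r_n:=d(u_n,v_n)$ and $x_n:=u_n$, so $r_n\to 0$ and $v_n\in B(x_n,2r_n)\setminus\{x_n\}$. Since $M$ is locally compact, after passing to a smaller pair if necessary (or shrinking), I may assume $\overline{B}(x_n,\rho_n)$ is compact for some $\rho_n>2r_n$; rescaling the radius I will arrange everything inside a compact clopen neighbourhood, exactly as in the totally disconnected case. Using \cite[Proposition 2.6.8]{coo}, $M$ has a basis of clopen sets, so I can choose a compact clopen set $K_n$ with $x_n\in K_n\subseteq B(x_n,2r_n)$ and $v_n\notin$ some clopen subset; more precisely, pick a clopen $C_n$ with $x_n\in C_n\subseteq K_n$ and $v_n\notin C_n$. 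This is possible because $\{x_n\}$ and $\{v_n\}$ are at positive distance and clopen sets form a basis. Then define
\[
h_n(t):=\begin{cases}1 & t\in K_n\setminus C_n,\\ 0 & \text{otherwise}.\end{cases}
\]
The sets $A_1:=C_n$, $A_2:=K_n\setminus C_n$ are compact (closed subsets of the compact $K_n$) and $A_3:=M\setminus K_n$ is closed, so $\alpha_n:=\min_{i\neq j}\mathrm{dist}(A_i,A_j)>0$; hence $h_n$ is Lipschitz and, since any pair with $0<d(x,y)<\alpha_n$ lies in a single $A_i$, it is little-Lipschitz. Also $h_n(x_n)=0$ (as $x_n\in C_n$), $h_n\neq 0$ (as $v_n\in K_n\setminus C_n$, giving $h_n(v_n)=1$), and $h_n(t)=0$ for $t\notin K_n\supseteq$, in particular for $t\notin B(x_n,2r_n)$. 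Setting $f_n:=h_n/\|h_n\|$ gives the required sequence with support radius $2r_n\to 0$.

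With $\{f_n\}\subset S_{lip(M)}\subseteq S_{S(M)}$ (the support being bounded, $f_n$ is automatically flat at infinity, so $f_n\in S(M)$), base points $\{x_n\}$, and radii $\{2r_n\}\to 0$ satisfying $f_n(x_n)=0$ and $f_n\equiv 0$ off $B(x_n,2r_n)$, Proposition~\ref{puntosaclema} applied to $W=lip(M)$ and to $W=S(M)$ yields that both are UASQ. The main obstacle, and the only genuinely new point, is the first step: extracting a compact clopen neighbourhood $K_n$ of one point of the close pair that is small and still separates the two points by a clopen set — I expect to have to be a little careful to ensure $K_n$ is both compact (using local compactness) and clopen (using total disconnectedness and \cite[Proposition 2.6.8]{coo}), possibly shrinking $r_n$ so that $\overline{B}(x_n,2r_n)$ is compact before choosing $K_n$ inside it.
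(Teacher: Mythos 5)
Your overall strategy --- producing small clopen bumps around the close pairs $(u_n,v_n)$ and feeding them to Proposition~\ref{puntosaclema} --- is sound, and would in fact give a single unified argument where the paper splits into two cases. But there is a genuine gap at exactly the step you flag as the delicate one. You choose $K_n$ to be a compact clopen set with $x_n\in K_n\subseteq B(x_n,2r_n)$ and then assert that $v_n\in K_n\setminus C_n$, so that $h_n(v_n)=1$ and $h_n\neq 0$. Nothing in your construction forces $v_n\in K_n$: the clopen basis gives arbitrarily small clopen neighbourhoods of $x_n$, not clopen neighbourhoods containing a prescribed second point. Worse, in the case the theorem genuinely adds beyond Proposition~\ref{puntosactotdis}, namely $M'=\emptyset$, every $x_n$ is isolated, so one may be forced to take $K_n=\{x_n\}$; then $C_n=K_n$, $K_n\setminus C_n=\emptyset$, $h_n\equiv 0$, and $f_n=h_n/\Vert h_n\Vert$ is undefined. (When $x_n$ happens to be a cluster point, $h_n$ is nonzero for a different reason --- $K_n$ contains \emph{some} point outside $C_n$ --- but that is not the argument you wrote, and it does not cover the discrete case.)

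The repair is short: since $d(u_n,v_n)=r_n>0$, choose disjoint compact clopen sets $K_{u_n}\ni u_n$ and $K_{v_n}\ni v_n$ contained in $B(u_n,r_n/2)$ and $B(v_n,r_n/2)$ respectively, and put $K_n:=K_{u_n}\cup K_{v_n}$ and $C_n:=K_{u_n}$; then $h_n=\chi_{K_{v_n}}$ is a nonzero little-Lipschitz function vanishing at $u_n$ and outside $B(u_n,2r_n)$, and the rest of your argument (the positive-distance estimate between the pieces, membership in $S(M)$, and the appeal to Proposition~\ref{puntosaclema}) goes through. For comparison, the paper sidesteps the issue by treating two cases: if $M'\neq\emptyset$ it relocates the origin to a cluster point and quotes Proposition~\ref{puntosactotdis}; if $M'=\emptyset$ then $M$ is discrete and the functions $f_n=\alpha_n\chi_{\{x_n\}}$, with $\alpha_n$ the isolation radius of $x_n$, already do the job --- in that case neither local compactness nor total disconnectedness is needed. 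Your (repaired) route buys uniformity; the paper's route buys a weaker hypothesis in the discrete case.
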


\begin{proof}

On the one hand, assume that $M'\neq \emptyset$. In that case we can choose freely the origin $0$ to be a cluster point. In that case, Propositions \ref{puntosactotdis} finishes the proof.

On the other hand, assume that $M'=\emptyset$. As $M$ is not uniformly discrete then there exists a pair of sequences $\{x_n\},\{y_n\}$ in $M$ such that $0<d(x_n,y_n)$ and the sequence of last distances converges to zero. We can assume, as $0\notin M'$, that $x_n\neq 0$ and $y_n\neq 0$ for each $n\in \mathbb N$.

For every $n\in\mathbb N$ define $\alpha_n:=\inf\{d(x_n,y):y\neq x_n\}$. Now $\{\alpha_n\}$ is a sequence of positive numbers (because $M$ is a discrete metric space) which is null (because $\alpha_n\leq d(x_n,y_n$ for every $n\in \mathbb N$).

For each $n\in\mathbb N$, define $f_n:=\alpha_n\chi_{\{x_n\}}$. First of all, we will prove that $f\in Lip(M)$. To this aim, pick $x,y\in M, x\neq y$. Now

\begin{equation}\label{dicocasodiscrenounifdiscre}
\frac{\vert f_n(x)-f_n(y)\vert}{d(x,y)}\neq 0\mbox{ if, and only if, } x=x_n\mbox{ or } y=x_n.\end{equation}
When $x=x_n$ then we have the estimate
$$\frac{\vert f(x_n)\vert}{d(x_n,y)}=\frac{\alpha_n}{d(x_n,y)}\leq 1.$$
Consequently, $f_n\in B_{Lip(M)}$ for each $n\in \mathbb N$. It is straightforward from the definition of infimum that previous functions are actually norm-one ones. Now we shall prove the little-Lipschitz condition. This is an obvious consequence from the fact that given $n\in\mathbb N$ and $x,y\in M$ such that $0<d(x,y)<\alpha_n$ then both $x$ and $y$ are different to $x_n$. Consequently 
$$\frac{\vert f_n(x)-f_n(y)\vert}{d(x,y)}=0.$$
Therefore $\{f_n\}$ is a sequence in $S_{lip(M)}$. Indeed, $\{f_n\}$ is still contained in $S(M)$, because given $x,y\in M, x\neq y$, we get
$$\frac{\vert f(x)-f(y)\vert}{d(x,y)}\mathop{\leq}\limits^{\mbox{
(\ref{dicocasodiscrenounifdiscre})}}\frac{\alpha_n}{d(x_n,y)}\stackrel{d(y,0)\rightarrow\infty}{\xrightarrow{\hspace*{1cm}}}0.$$
So $\{f_n\}$ is a sequence in the unit sphere of $S(M)$. Notice that $f_n(y_n)=0$ and $f_n$ vanishes out of $B(y_n, 2d(x_n,y_n))$. Therefore, the sequence $\{f_n\}$ satisfies the assumptions of Proposition \ref{puntosaclema}, which proves that both $lip(M)$ and $S(M)$ are UASQ. 
\end{proof}

As a consequence we get the following Corollary.

\begin{corollary}
Let $M$ be a locally compact and totally disconnected metric space with a designated origin. Assume that $M$ is not uniformly discrete and let $X$ be either $lip(M)$ or $S(M)$. Then:
\begin{enumerate}
\item $X$ contains $1+\varepsilon$-isometric copies of $c_0$ \cite[Lemma 2.6]{all}.
\item $X$ can not be isometric to any dual Banach space.
\item $X^*$ has an octahedral norm and, consequently, contains an isomorphic copy of $\ell_1$ \cite[Proposition 2.5]{all}.
\end{enumerate}
\end{corollary}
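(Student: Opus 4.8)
The plan is to read off all three items from Theorem~\ref{discnouniuasq}, which under the present hypotheses asserts that $X$ (be it $lip(M)$ or $S(M)$) is UASQ. The first step is to record that UASQ implies ASQ: given a finite family $\{y_1,\dots,y_k\}\subseteq S_X$ and $\delta>0$, condition~(\ref{defi211}) of Definition~\ref{definitiouasq} already provides a single $x_\gamma\in S_X$ with $\Vert y_i\pm x_\gamma\Vert\le 1+\delta$ for every $i$, which is exactly the defining property of ASQ recalled in the Introduction. Thus $X$ is simultaneously ASQ and UASQ, and each of (1)--(3) follows by combining one of these two facts with a result quoted from \cite{all}.

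Item~(1) is immediate from \cite[Lemma 2.6]{all}, which says that an ASQ space contains, for each $\varepsilon>0$, a subspace $(1+\varepsilon)$-isometric to $c_0$. Item~(3) follows from \cite[Proposition 2.5]{all}, by which the dual of an ASQ space is octahedral; applied to the ASQ space $X$ it gives that $X^*$ is octahedral, and it is classical that an octahedral Banach space contains an isomorphic copy of $\ell_1$. For item~(2) I would argue by contradiction: if $X$ were linearly isometric to $Y^*$ for some Banach space $Y$, then, since being UASQ is a purely isometric property of a Banach space (Definition~\ref{definitiouasq} only refers to the norm, through $S_X$ and the quantities $\Vert\sum_{\gamma\in F}\xi_\gamma x_\gamma\Vert$ and $\Vert y_i\pm x_\gamma\Vert$), the space $Y^*$ would itself be UASQ, contradicting Theorem~\ref{uncondASQnodual}.

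There is no real obstacle here: the whole content sits in Theorem~\ref{discnouniuasq}, Theorem~\ref{uncondASQnodual} and the two cited statements from \cite{all}. The only point deserving a line of care is the isometric nature of the conclusion in~(2): unlike renorming-type properties, UASQ is an isometric invariant, so it cannot be destroyed by passing from $X$ to an isometric copy such as a dual space, which is precisely what makes the non-duality conclusion valid in the isometric (not merely isomorphic) category.
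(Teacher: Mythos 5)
Your proposal is correct and follows exactly the route the paper intends: the corollary is stated without proof as an immediate consequence of Theorem~\ref{discnouniuasq} (which gives that $X$ is UASQ, hence ASQ), together with the two cited results from \cite{all} for items (1) and (3), and Theorem~\ref{uncondASQnodual} plus the isometric invariance of UASQ for item (2). Nothing to add.
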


\section{The uniformly discrete case.}\label{secunifdisc}

\bigskip

Throughout this section $M$ will denote an uniformly discrete metric space, that is, $\inf_{x\neq y} d(x,y)>0$. We shall begin by analysing the bounded case, for which we have the equalities $S(M)=lip(M)=Lip(M)$. Hence, previous spaces can not be UASQ as being dual spaces. However, we will actually prove that $Lip(M)$ even fails to be an ASQ space.

\begin{proposition}\label{unifdiscacotnotasq}

Let $M$ be an uniformly discrete and bounded metric space with a designated origin $0$. Then $Lip(M)$ is not ASQ.

\end{proposition}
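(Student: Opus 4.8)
The plan is to show that almost squareness already fails because of a single norm‑one function of $Lip(M)$, so that we never need to take $k\geq 2$. Put $\theta:=\inf_{x\neq y}d(x,y)$, which is strictly positive by uniform discreteness, and $D:=\sup_{x,y\in M}d(x,y)<\infty$ by boundedness; we may assume $M\neq\{0\}$, so that $0<\theta\leq D$. As the obstructing element I take $f_1:=d(\cdot,0)$, which vanishes at $0$ and, by the reverse triangle inequality (the value $1$ being realized at any pair $(x,0)$ with $x\neq 0$), satisfies $\Vert f_1\Vert=1$.

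The key observation is that every $g\in S_{Lip(M)}$ must be large at some point of $M$: since $d(a,b)\geq\theta$ whenever $a\neq b$, one has $1=\Vert g\Vert\leq\frac{2}{\theta}\sup_{x\in M}\vert g(x)\vert$, hence $\sup_{x\in M}\vert g(x)\vert\geq\theta/2$; as $g(0)=0$ this produces a point $a\in M\setminus\{0\}$ with $\vert g(a)\vert>\theta/4$. I would then test $f_1\pm g$ on the pair $(a,0)$: from $f_1(a)=d(a,0)$, $f_1(0)=g(0)=0$, and the elementary identity $\max(\vert 1+c\vert,\vert 1-c\vert)=1+\vert c\vert$ applied with $c=g(a)/d(a,0)$, we obtain
\[
\max\bigl(\Vert f_1+g\Vert,\Vert f_1-g\Vert\bigr)\ \geq\ 1+\frac{\vert g(a)\vert}{d(a,0)}\ >\ 1+\frac{\theta}{4D}.
\]

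Writing $\kappa:=\theta/(4D)>0$, this establishes $\max(\Vert f_1+g\Vert,\Vert f_1-g\Vert)>1+\kappa$ for \emph{every} $g\in S_{Lip(M)}$. Consequently, for $\varepsilon:=\kappa/2$ and the one‑element family $\{f_1\}\subseteq S_{Lip(M)}$ there is no $y\in S_{Lip(M)}$ with $\Vert f_1\pm y\Vert\leq 1+\varepsilon$, which is exactly the statement that $Lip(M)$ is not ASQ.

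There is no serious obstacle here: the argument is uniform in the cardinality of $M$ (no finite/infinite dichotomy is needed), and the only step requiring a moment's thought is the choice of $f_1$. The function $d(\cdot,0)$ is exactly right because it has maximal slope $1$ on every pair with the origin, so it interacts optimally with the forced largeness of an arbitrary norm‑one $g$; after that the verification is the elementary computation above.
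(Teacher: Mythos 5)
Your proof is correct and follows essentially the same route as the paper: both use the single witness function $f=d(\cdot,0)$ and exploit uniform discreteness to force a norm-one $g$ to be pointwise large (equivalently, in the paper's contrapositive form, to force a $g$ with $\Vert f\pm g\Vert$ small to be norm-small), together with boundedness to turn that pointwise information into a slope estimate against the origin. The only difference is organizational — you give a direct quantitative lower bound $\max(\Vert f\pm g\Vert)>1+\theta/(4D)$ instead of deriving a contradiction with $\Vert g\Vert=1$ — which does not change the substance of the argument.
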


\begin{proof}
Define $D:=\inf\limits_{x\neq y} d(x,y)$ and $R(M):=\sup\limits_{x\in M}d(x,0)$. Consider also $f(x):=d(x,0)$, which is a norm-one Lipschitz function and $\varepsilon>0$ such that $\frac{2R(M)}{D}\varepsilon<\frac{1}{2}$. We will see that $f$ and $\varepsilon$ fail the definition of ASQ. For this, pick $g\in S_{Lip(M)}$ such that $\Vert f\pm g\Vert\leq 1+\varepsilon$ and let us prove that $\Vert g\Vert<\frac{1}{2}$. To this aim, pick $x\in M, x\neq 0$. Then
$$1+\varepsilon\geq \frac{\vert d(x,0)\pm g(x)\vert}{d(x,0)}.$$
Now taking a suitable choice of sign we get
$$1+\varepsilon\geq \frac{d(x,0)+\vert g(x)\vert}{d(x,0)}=1+\frac{\vert g(x)\vert}{d(x,0)}\Rightarrow \vert g(x)\vert\leq \varepsilon d(x,0).$$
Finally, given $x,y\in M, x\neq y$, previous inequality yields
$$\frac{\vert g(x)-g(y)\vert}{d(x,y)}\leq \varepsilon \frac{d(x,0)+d(y,0)}{d(x,y)}\leq \frac{2R(M)}{D}\varepsilon.$$
Consequently, taking supremum in the above estimate we get that $\Vert g\Vert\leq \frac{2R(M)}{D}\varepsilon<\frac{1}{2}$, as desired.
\end{proof}

Now we will explore the unbounded case, in which $S(M)\neq lip(M)=Lip(M)$. Even though we will not get a general result of non almost squareness on $Lip(M)$, we will at least exhibit some examples of unbounded metric spaces in which $Lip(M)$ is not ASQ. To this aim, given $M$ a metric space, we will consider
$$N(M):=\left\{\frac{\delta_x-\delta_y}{d(x,y)}: x\neq y \right\}.$$
It is clear that $N(M)$ is a norming subset for $Lip(M)$. Now we shall exhibit a characterisation of the fact that $0$ belongs to the weak closure of $N(M)$, which will be applied to study almost squareness on $Lip(M)$. We thank to G. Lancien for addressing us to the following proof. 

\begin{lemma}\label{cara0cierredebil}

Let $M$ be a metric space with a designated origin $0$. Then, $0\notin \overline{N(M)}^w$ if, and only if, $M$ can be bi-Lipschitz embedded into a $\ell_\infty^k$, for some natural number $k$.

\end{lemma}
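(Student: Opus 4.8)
The plan is to prove both implications by making explicit the link between $0 \notin \overline{N(M)}^w$ and the existence of finitely many Lipschitz functions that ``uniformly detect'' all the elementary molecules $\frac{\delta_x-\delta_y}{d(x,y)}$.

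First I would handle the easy direction. Suppose $M$ embeds bi-Lipschitz into $\ell_\infty^k$ via a map $\varphi$, i.e. there are constants $a,b>0$ with $a\,d(x,y)\le \|\varphi(x)-\varphi(y)\|_\infty\le b\,d(x,y)$ for all $x,y\in M$; after composing with a translation we may assume $\varphi(0)=0$. Writing $\varphi=(\varphi_1,\dots,\varphi_k)$, each coordinate $\varphi_j$ is a (scalar multiple of a) Lipschitz function vanishing at $0$, hence defines a functional on $\mathcal F(M)$ via the duality $Lip(M)=\mathcal F(M)^*$. For any $x\ne y$, picking the coordinate $j$ realizing the max gives $\left|\varphi_j\!\left(\frac{\delta_x-\delta_y}{d(x,y)}\right)\right| = \frac{\|\varphi(x)-\varphi(y)\|_\infty}{d(x,y)} \ge a$. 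Thus the weak-open set $U:=\{\mu : |\varphi_j(\mu)|<a \text{ for all } j=1,\dots,k\}$ contains $0$ but misses $N(M)$, so $0\notin \overline{N(M)}^w$.

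For the converse, suppose $0\notin\overline{N(M)}^w$. By definition of the weak topology on $\mathcal F(M)$ there exist $g_1,\dots,g_k\in Lip(M)$ (i.e. elements of $\mathcal F(M)^*$) and $\alpha>0$ such that the basic weak-neighbourhood $\{\mu : |g_j(\mu)|<\alpha \text{ for all } j\}$ is disjoint from $N(M)$; that is, for every $x\ne y$ there is some $j$ with $\left|\frac{g_j(x)-g_j(y)}{d(x,y)}\right|\ge\alpha$. Normalizing, we may take each $g_j\in B_{Lip(M)}$, so $|g_j(x)-g_j(y)|\le d(x,y)$ always. Then I would define $\varphi:M\to\ell_\infty^k$ by $\varphi(x):=(g_1(x),\dots,g_k(x))$; this sends $0$ to $0$. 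The upper bi-Lipschitz estimate $\|\varphi(x)-\varphi(y)\|_\infty\le d(x,y)$ is immediate from $g_j\in B_{Lip(M)}$, and the lower estimate $\|\varphi(x)-\varphi(y)\|_\infty=\max_j|g_j(x)-g_j(y)|\ge \alpha\,d(x,y)$ is exactly the separation condition. Hence $\varphi$ is a bi-Lipschitz embedding of $M$ into $\ell_\infty^k$.

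The main point to be careful about — rather than a deep obstacle — is the correct identification of the weak topology on $N(M)\subseteq \mathcal F(M)$: one must use that $Lip(M)=\mathcal F(M)^*$ (recalled in the introduction, with $X=\mathbb R$) so that weak-neighbourhoods of $0$ in $\mathcal F(M)$ are precisely finite intersections of sets $\{|g_j(\cdot)|<\alpha\}$ with $g_j\in Lip(M)$, and that each $\frac{\delta_x-\delta_y}{d(x,y)}$ acts on $g\in Lip(M)$ as $\frac{g(x)-g(y)}{d(x,y)}$. Once this dictionary is in place the two implications are, as sketched, just a matter of reading the finitely many detecting functions as the coordinates of an $\ell_\infty^k$-valued map and conversely; the normalization $g_j\in B_{Lip(M)}$ is what turns the (a priori only lower) separation estimate into a genuine two-sided bi-Lipschitz bound.
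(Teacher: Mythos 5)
Your proof is correct and follows essentially the same route as the paper's: in both directions one identifies the finitely many functionals defining a basic weak neighbourhood of $0$ in $\mathcal F(M)$ with the coordinates of an $\ell_\infty^k$-valued Lipschitz map, and reads the condition $V\cap N(M)=\emptyset$ as the lower bi-Lipschitz estimate. The extra normalization $g_j\in B_{Lip(M)}$ and the translation ensuring $\varphi(0)=0$ are harmless refinements that the paper leaves implicit.
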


\begin{proof}
First of all, assume that there exists a natural number $k$ and $f\colon M\rightarrow \ell_\infty^k$ a bi-Lipschitz embedding. Then $f:=(f_1,\ldots, f_k)$ for suitable $f_i\in Lip(M)$. As $f$ is a bi-Lipschitz embedding we can find $\varepsilon>0$ such that 
$$\forall x,y\in M, x\neq y \mbox{ one has } \frac{\Vert f(x)-f(y)\Vert}{d(x,y)}\geq \varepsilon.$$
In view of the norm on $\ell_\infty^k$, previous condition is equivalent to the following one:
$$\forall x,y\in M, x\neq y\ \exists\ i\in\{1,\ldots, k\} \mbox{ such that } \frac{\vert f_i(x)-f_i(y)\vert}{d(x,y)}\geq \varepsilon.$$
This means that the weak neighbourhood of $0$ in $\mathcal F(M)$ given by $V:=\{\gamma\in \mathcal F(M): \vert f_i(\gamma)\vert<\varepsilon\ \forall i\in\{1,\ldots, k\} \}$ satisfies that $V\cap N(M)=\emptyset$, so $0\notin \overline{N(M)}^w$.

Conversely, assume that $0\notin \overline{N(M)}^w$. This means that there exists a weak neighbourhood of $0$ in $\mathcal F(M)$, say  $V:=\{\gamma\in \mathcal F(M): \vert f_i(\gamma)\vert<\varepsilon\ \forall i\in\{1,\ldots, k\} \}$ such that $V\cap N(M)=\emptyset$, where $\varepsilon>0$ and $f_1,\ldots, f_k\in Lip(M)$. Define $f=(f_1,\ldots, f_k)$. Then $f\colon M\longrightarrow \ell_\infty^k$ is a Lipschitz map. We will check that $f$ is a bi-Lipschitz embedding. To this aim pick $x,y\in M, x\neq y$. By assumption $\frac{\delta_x-\delta_y}{d(x,y)}\notin V$, so there exists $i\in\{1,\ldots,  k\}$ such that $\frac{\vert f_i(x)-f_i(y)\vert}{d(x,y)}\geq \varepsilon$. Consequently $\frac{\Vert f(x)-f(y)\Vert}{d(x,y)}\geq \varepsilon$. As $x,y\in M$ were arbitrary we conclude that $f$ is a bi-Lipschitz embedding into $\ell_\infty^k$, so we are done.
\end{proof}

From here we will get some examples of metric spaces $M$ such that $Lip(M)$ can not be an ASQ Banach space.

\begin{proposition}\label{0nocierreLipnoasq}
Let $M$ be a metric space which is bi-Lipschitz embeddable into any $\ell_\infty^k$. Then $Lip(M)$ is not an ASQ Banach space.
\end{proposition}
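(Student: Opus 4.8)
The plan is to run everything through Lemma~\ref{cara0cierredebil}. Since $M$ is bi-Lipschitz embeddable into some $\ell_\infty^k$, that lemma gives $0\notin\overline{N(M)}^w$, so there are $f_1,\dots,f_k\in Lip(M)$ and $\varepsilon>0$ for which the basic weak neighbourhood $V=\{\gamma\in\mathcal F(M):|f_i(\gamma)|<\varepsilon,\ i=1,\dots,k\}$ of $0$ satisfies $V\cap N(M)=\emptyset$. Replacing each $f_i$ by $f_i/\Vert f_i\Vert$ and shrinking $\varepsilon$ to $\min_i\varepsilon/\Vert f_i\Vert$ only shrinks $V$, so I may assume $f_1,\dots,f_k\in S_{Lip(M)}$; and then $V\cap N(M)=\emptyset$ says exactly that for every pair $x\neq y$ in $M$ there is an index $i\in\{1,\dots,k\}$ with $|f_i(x)-f_i(y)|/d(x,y)\geq\varepsilon$. (We may of course assume $M$ has more than one point, since otherwise $Lip(M)=\{0\}$.)

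Next I would show that the finite set $\{f_1,\dots,f_k\}\subseteq S_{Lip(M)}$ together with $\varepsilon_0:=\varepsilon/3$ witnesses that $Lip(M)$ is not ASQ. Fix an arbitrary $h\in S_{Lip(M)}$. As $\Vert h\Vert=1$, choose $x\neq y$ in $M$ with $|h(x)-h(y)|/d(x,y)>1-\varepsilon/3$, and let $i$ be an index with $|f_i(x)-f_i(y)|/d(x,y)\geq\varepsilon$. Writing $m_{xy}:=\frac{\delta_x-\delta_y}{d(x,y)}\in N(M)$ and picking the sign so that $f_i(m_{xy})$ and $\pm h(m_{xy})$ have the same sign, I obtain
\[
\max\{\Vert f_i+h\Vert,\Vert f_i-h\Vert\}\ \geq\ |f_i(m_{xy})|+|h(m_{xy})|\ >\ \varepsilon+1-\frac{\varepsilon}{3}\ >\ 1+\varepsilon_0 .
\]
Thus no $h\in S_{Lip(M)}$ can satisfy $\Vert f_i\pm h\Vert\leq 1+\varepsilon_0$ for all $i$, which is precisely the negation of almost squareness.

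I do not expect a genuine obstacle here: all the work is already contained in Lemma~\ref{cara0cierredebil}, and once the condition $0\notin\overline{N(M)}^w$ has been unpacked into the uniform norming property of $f_1,\dots,f_k$ over the set $N(M)$, the failure of ASQ drops out of the triangle inequality on $N(M)$. The only mild subtlety worth flagging is that the index $i$ produced in the second paragraph depends on $h$; this is harmless, because the negation of ASQ merely requires that for each $h\in S_X$ \emph{some} member of the fixed finite family $\{f_1,\dots,f_k\}$ fails the estimate.
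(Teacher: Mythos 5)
Your proposal is correct and follows essentially the same route as the paper: both hinge on Lemma~\ref{cara0cierredebil} together with the same estimate obtained by evaluating at a molecule $\frac{\delta_x-\delta_y}{d(x,y)}$ where the test function almost attains its norm and choosing the sign to make the two terms add; you simply run the argument contrapositively (the fixed family $f_1,\dots,f_k$ produced by the lemma witnesses the failure of ASQ with $\varepsilon/3$), whereas the paper assumes ASQ and derives $0\in \overline{N(M)}^w$, contradicting the lemma. The normalisation of the $f_i$ and the remark about the one-point case are fine and do not affect the argument.
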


\begin{proof}
Assume, by contradiction, that $Lip(M)$ is ASQ, and let us prove that $0\in \overline{N(M)}^w$. To this aim pick $f_1,\ldots, f_k\in S_{Lip(M)}$ and $\varepsilon>0$. Let us prove that $V:=\{\gamma\in \mathcal F(M): \vert f_i(\gamma)\vert<\varepsilon\ \forall\ i\in \{1,\ldots, k\} \}$ intersects $N(M)$. As we are assuming that $Lip(M)$ is ASQ we can find $g\in S_{Lip(M)}$ such that
$$\Vert f_i\pm g\Vert\leq 1+\frac{\varepsilon}{2}\ \forall i\in \{1,\ldots, k\}.$$
Since $g$ is a norm-one function we can find $x,y\in M, x\neq y$ such that $\frac{\vert g(x)-g(y)\vert}{d(x,y)}>1-\frac{\varepsilon}{2}$. Pick an arbitrary $i\in \{1,\ldots, k\}$. Then 
$$1+\frac{\varepsilon}{2}\geq \frac{\vert f_i(x)-f_i(y)\pm (g(x)-g(y))\vert}{d(x,y)}.$$
Taking a suitable choice of sign we deduce that
$$1+\frac{\varepsilon}{2}\geq \frac{\vert f_i(x)-f_i(y)\vert}{d(x,y)}+\frac{\vert g(x)-g(y)\vert}{d(x,y)}>\frac{\vert f_i(x)-f_i(y)\vert}{d(x,y)}+1-\frac{\varepsilon}{2}.$$
Consequently $\frac{\vert f_i(x)-f_i(y)\vert}{d(x,y)}<\varepsilon$. As $i$ was arbitrary we conclude that $\frac{\delta_x-\delta_y}{d(x,y)}\in V$. From here we get that $0\in \overline{N(M)}^w$, which contradicts Proposition \ref{cara0cierredebil}. Thus, $Lip(M)$ can not be an ASQ Banach space, as desired.
\end{proof}

In spite of Proposition \ref{0nocierreLipnoasq}, we get even unconditional almost squareness whenever we restrict to a smaller subspace.

\begin{proposition}\label{unidiscsasq}
Let $M$ be an unbounded and uniformly discrete metric space. Then $S(M)$ is UASQ.

\end{proposition}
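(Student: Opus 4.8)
The plan is to apply Proposition \ref{condisufiuasq}. Since $M$ is uniformly discrete we have $lip(M)=Lip(M)$ (the little-Lipschitz condition is vacuous below the separation constant $D:=\inf_{x\neq y}d(x,y)>0$), so $S(M)$ consists exactly of the Lipschitz functions vanishing at $0$ which are flat at infinity. As $M$ is unbounded I will pick a sequence $\{x_n\}\subseteq M\setminus\{0\}$ with $d(x_n,0)\to\infty$, set $\alpha_n:=\inf\{d(x_n,y):y\neq x_n\}$ (so $D\leq\alpha_n<\infty$, using uniform discreteness and $\#M\geq 2$), and define $f_n:=\alpha_n\chi_{\{x_n\}}$. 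The sequence $\{f_n\}$ will be the candidate in Proposition \ref{condisufiuasq}.

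First I would check that $f_n\in S_{S(M)}$. We have $f_n(0)=0$ because $x_n\neq 0$, and $\Vert f_n\Vert=\sup_{y\neq x_n}\frac{\alpha_n}{d(x_n,y)}=\frac{\alpha_n}{\alpha_n}=1$; moreover $f_n\in lip(M)$ since $M$ is uniformly discrete. Finally $f_n\in S(M)$: $f_n$ vanishes off the bounded set $\{x_n\}$, so for $r>d(x_n,0)$ any pair $x\neq y$ with, say, $x=x_n$ and $y\notin B(0,r)$ satisfies $d(x_n,y)\geq d(y,0)-d(x_n,0)>r-d(x_n,0)$, whence the relevant supremum is at most $\alpha_n/(r-d(x_n,0))\to 0$ as $r\to\infty$.

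The heart of the proof is the estimate $\Vert g\pm f_n\Vert\leq 1+\beta_n$ for every $g\in S_{S(M)}$, where $\beta_n:=\sup\bigl\{\tfrac{\vert g(x)-g(y)\vert}{d(x,y)}:x\neq y,\ x\notin B(0,d(x_n,0))\text{ or }y\notin B(0,d(x_n,0))\bigr\}$. Fix $x\neq y$. If $x_n\notin\{x,y\}$ then $f_n(x)=f_n(y)=0$ and the incremental quotient of $g\pm f_n$ is $\leq\Vert g\Vert=1$. If $x=x_n$ (the case $y=x_n$ being symmetric), then
$$\frac{\vert (g\pm f_n)(x_n)-(g\pm f_n)(y)\vert}{d(x_n,y)}\leq\frac{\vert g(x_n)-g(y)\vert}{d(x_n,y)}+\frac{\alpha_n}{d(x_n,y)}\leq\beta_n+1,$$
using $x_n\notin B(0,d(x_n,0))$ for the first term and $d(x_n,y)\geq\alpha_n$ for the second. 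Taking the supremum over $x\neq y$ gives $\Vert g\pm f_n\Vert\leq 1+\beta_n$. Since $g\in S(M)$ and $d(x_n,0)\to\infty$, we have $\beta_n\to 0$; hence, given $\varepsilon>0$, there is $m$ with $\Vert g\pm f_n\Vert\leq 1+\varepsilon$ for all $n\geq m$, and Proposition \ref{condisufiuasq} yields that $S(M)$ is UASQ.

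The only genuinely delicate point is the choice of the bumps: in contrast with Proposition \ref{puntosaclema} one cannot arrange $f_n(x_n)=0$, since in a uniformly discrete space a function supported on $B(x_n,r_n)$ with $r_n\to 0$ is eventually identically zero; so the bump must be concentrated at $x_n$ with $f_n(x_n)\neq 0$, and then the near-cancellation that forces $\Vert g\pm f_n\Vert$ close to $1$ must come entirely from the flatness at infinity of the elements of $S(M)$ — precisely the feature that $Lip(M)$ lacks, in accordance with Proposition \ref{0nocierreLipnoasq}.
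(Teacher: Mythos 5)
Your proposal is correct and follows essentially the same route as the paper: the same bump functions $f_n=\alpha_n\chi_{\{x_n\}}$ supported at points $x_n$ escaping to infinity, normalized by the distance to the nearest neighbour, fed into Proposition \ref{condisufiuasq}, with the key estimate coming from the flatness at infinity of an arbitrary $g\in S_{S(M)}$. The only difference is cosmetic (you split cases according to whether $x_n\in\{x,y\}$, the paper according to whether $x,y$ lie in a ball $B(0,R)$ witnessing that flatness), and your closing remark correctly identifies why this construction, rather than that of Proposition \ref{puntosaclema}, is needed here.
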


\begin{proof}
Pick $\{x_n\}$ to be a sequence in $M$ such that $\{d(x_n,0)\}\rightarrow \infty$. Define, for each $n\in\mathbb N$, $\alpha_n:=\sup\{R>0: B(x_n,R)=\{x_n\} \}$ and define $f_n:=\alpha_n \chi_{\{x_n\}}$, which is a norm-one Lipschitz function. Indeed, it is straightforward to prove that $f_n\in S(M)$. We will prove that $\{f_n\}$ verifies the assumptions of Proposition \ref{condisufiuasq}. To this aim pick $g\in S_{S(M)}$ and $\varepsilon>0$. As $g\in S(M)$ we can find $R>0$ such that
$$ x\mbox{ or }y\notin B(0,R)\Rightarrow \frac{\vert g(x)-g(y)\vert}{d(x,y)}<\varepsilon.$$
Pick $m\in\mathbb N$ verifying that $n\geq m$ implies $d(x_n,0)>R$.

Pick $n\geq m$ and let us estimate $\Vert g\pm f_n\Vert$. To this aim consider $x,y\in M, x\neq y$. Then
$$\frac{\vert g(x)\pm f_n(x)-(g(y)\pm f_n(y)) \vert}{d(x,y)}\leq \underbrace{\frac{\vert g(x)-g(y)\vert}{d(x,y)}}_A+\underbrace{\frac{\vert f_n(x)-f_n(y)\vert}{d(x,y)}}_B:=C$$
We shall discuss by cases:
\begin{enumerate}
\item If $x\notin B(0,R)$ or $y\notin B(0,R)$ then $A<\varepsilon$ and so $C\leq 1+\varepsilon$.
\item If both $x,y\in B(0,R)$ then by definition of $f_n$ we get $f_n(x)=f_n(y)=0$ and so $C\leq 1$.
\end{enumerate}
In any case we get $\Vert g\pm f_n\Vert\leq 1+\varepsilon$, so we are done.\end{proof}

\section{Vector valued case}\label{secvectorvalued}

\bigskip

In \cite[Proposition 3.7]{jsv} it is proved that if $M$ is compact then $lip(M,X)$ is  linearly isometrically isomorphic to the space of compact operators from $X^*$ to $lip(M)$ which are continuous for the bounded weak-star topology. We will extend that result to the case of proper metric spaces.

The following result, which is a slight generalisation of Theorem 3.2 in \cite{johnson}, give us a criterion for compactness in $S(M)$. 

\begin{proposition}\label{relcomp}

Let $M$ be a proper pointed metric space and $\mathcal F$ be a subset of $S(M)$. Then the following are equivalent:
\begin{enumerate}
\item[i)] $\mathcal F$ is relatively compact in $S(M)$.
\item[ii)] $\mathcal F$ is bounded and satisfies the $S(M)$-condition uniformly, that is, for each $\varepsilon>0$ there exist $\delta>0$ and $r>0$ such that 
\begin{align*}
\sup_{\stackrel{x \text{ or } y\notin B(0,r)}{x\neq y}} \frac{| f(x)-f(y)|}{d(x,y)}<\varepsilon, &
\sup_{
0<d(x,y)<\delta} \frac{| f(x)-f(y)|}{d(x,y)}<\varepsilon
\end{align*}
for every $f\in\mathcal F$. 
\end{enumerate}

\end{proposition}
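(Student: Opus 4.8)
The plan is to prove the equivalence by establishing both implications, with the Arzelà–Ascoli theorem as the underlying tool, exploiting that $M$ is proper so that closed bounded subsets of $M$ are compact.

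\textbf{From (i) to (ii).} Assume $\mathcal{F}$ is relatively compact in $S(M)$. Boundedness is immediate. For the uniform $S(M)$-condition, fix $\varepsilon>0$ and cover $\overline{\mathcal{F}}$ by finitely many balls $B(f_j,\varepsilon/3)$, $j=1,\dots,N$, with $f_j\in S(M)$. Each $f_j$ individually satisfies the two defining limits of $S(M)$, so one can pick a single $\delta>0$ and $r>0$ that work simultaneously for all $f_1,\dots,f_N$ up to error $\varepsilon/3$. Then for arbitrary $f\in\mathcal{F}$, choosing $j$ with $\|f-f_j\|<\varepsilon/3$ and using that $\|f-f_j\|$ controls the relevant difference quotients of $f-f_j$ uniformly, a triangle-inequality estimate gives that $f$ satisfies both suprema bounds below $\varepsilon$ with the chosen $\delta$ and $r$. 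This is routine once the finite net is in hand.

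\textbf{From (ii) to (i).} This is the substantive direction. Assume $\mathcal{F}$ is bounded and satisfies the $S(M)$-condition uniformly. The strategy is to show $\mathcal{F}$ is totally bounded in $S(M)$. First I would observe that the uniform boundedness plus uniform flatness near $0$ (and vanishing at $0$) forces $\mathcal{F}$ to be uniformly bounded and equi-Lipschitz on every ball $\overline{B}(0,r)$, which is compact; hence by Arzelà–Ascoli, for any fixed $r$, $\mathcal{F}$ restricted to $\overline{B}(0,r)$ is relatively compact in $C(\overline{B}(0,r))$, in fact in $\mathrm{Lip}(\overline{B}(0,r))$ after a standard extraction. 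Given $\varepsilon>0$, use (ii) to fix $r>0$ so that the ``behaviour at infinity'' supremum is below $\varepsilon$ for all $f\in\mathcal{F}$; this means the portion of any $f$'s Lipschitz norm coming from pairs with $x$ or $y$ outside $B(0,r)$ is negligible. Then cover the relatively compact restriction $\mathcal{F}|_{\overline{B}(0,2r)}$ by finitely many $\varepsilon$-balls; I claim the corresponding finite set, suitably extended to all of $M$, forms an $O(\varepsilon)$-net for $\mathcal{F}$ in the Lipschitz norm. The point is that $\|f-g\|$ as Lipschitz functions decomposes into a supremum over pairs inside $B(0,2r)$ — controlled by the $C(\overline{B}(0,2r))$-closeness together with equi-flatness arguments — and a supremum over pairs meeting the complement of $B(0,r)$, which is $O(\varepsilon)$ by the choice of $r$ for both $f$ and the net element. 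Finally, since $S(M)$ is a closed subspace of $\mathrm{Lip}(M)$ and each $f\in\mathcal{F}\subseteq S(M)$, total boundedness in the Lipschitz norm plus completeness yields relative compactness in $S(M)$.

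\textbf{Main obstacle.} The delicate point is the second direction: converting $C(\overline{B}(0,2r))$-closeness (uniform closeness of function \emph{values}) into closeness in the \emph{Lipschitz} norm (closeness of difference quotients). Uniform closeness of values does not by itself bound difference quotients; one must use the uniform little-Lipschitz condition to handle pairs $x,y$ that are very close (where the quotient of $f-g$ is small because both $f$ and $g$ are uniformly flat there), and handle pairs $x,y$ bounded away from each other by the fact that $d(x,y)$ is bounded below, so value-closeness \emph{does} bound the quotient. Splitting the pair space into ``close'', ``far but inside $B(0,2r)$'', and ``meeting the complement of $B(0,r)$'' and estimating each piece is where the real work lies; this mirrors the structure of the proof of \cite[Theorem 3.2]{johnson} with the extra region at infinity added.
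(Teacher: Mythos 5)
Your argument is correct in substance, but it follows a genuinely different route from the paper. The paper uses the de Leeuw-type transform $f\mapsto\tilde f$, $\tilde f(x,y)=\frac{f(x)-f(y)}{d(x,y)}$, to embed $S(M)$ isometrically into $C(K)$ where $K$ is the one-point compactification of $M\times M\setminus\Delta$; then Arzel\`a--Ascoli is applied once in $C(K)$, boundedness automatically gives equicontinuity at every finite point (via $|\tilde f(x,y)-\tilde f(s,t)|\le\|f\|_L\bigl\|\frac{\delta_x-\delta_y}{d(x,y)}-\frac{\delta_s-\delta_t}{d(s,t)}\bigr\|$), and the uniform $S(M)$-condition is, verbatim, equicontinuity at $\infty$ because the sets $K\setminus\bigl(\overline{B}(0,r)\times\overline{B}(0,r)\cap\{d(s,t)\ge\delta\}\bigr)$ form a neighbourhood basis of $\infty$. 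This makes both implications fall out of a single statement with essentially no case analysis. You instead prove total boundedness of $\mathcal F$ in the Lipschitz norm directly: Arzel\`a--Ascoli on $C(\overline{B}(0,r))$ (legitimate, since properness makes $\overline{B}(0,r)$ compact and the Lipschitz bound gives equicontinuity and, via $f(0)=0$, uniform boundedness), followed by the three-way split of pairs into ``close'', ``far but inside the ball'', and ``meeting the complement''. That works, and you correctly identify the one real difficulty --- passing from sup-norm closeness to difference-quotient closeness --- and the correct resolution (uniform flatness for close pairs, $d(x,y)\ge\delta$ for separated pairs). Two details to tighten when writing it out: the mesh of your $C(\overline{B}(0,r))$-net must be taken of order $\delta\varepsilon$, not $\varepsilon$, since the separated-pair estimate divides by $\delta$; and there is no need to ``suitably extend'' net centres to $M$ --- choose them among the elements of $\mathcal F$ themselves (at the cost of doubling the radius), which also lets you invoke the uniform $S(M)$-condition for the net elements in the remaining two regions. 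The paper's proof is shorter and more conceptual; yours is more elementary and makes the quantitative mechanism explicit, at the price of the bookkeeping above.
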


\begin{proof} Notice that $M\times M\setminus \Delta$ is a locally compact space, where $\Delta=\{(x,x):x\in M\}$. Let $K=(M\times M\setminus \Delta) \cup\{\infty\}$ be its one-point compactification. Given $f\in S(M)$, consider $\tilde{f}:K\to\real$ defined by $\tilde{f}(x,y)=\frac{f(x)-f(y)}{d(x,y)}$ and $\tilde{f}(\infty)=0$. Clearly $\tilde{f}$ is continuous at each $(x,y)\in M\times M\setminus \Delta$. Moreover, given $\varepsilon>0$ there exist $r>0$ and $\delta>0$ such that $\tilde{f}(x,y)<\varepsilon$ whenever $(x,y)$ belongs to the complementary of the compact set $\overline{B}(0,r)\times\overline{B}(0,r)\cap\{(s,t): d(s,t)\geq\delta\}$. So $\tilde{f}$ is continuous at $\infty$. Thus $f\mapsto \tilde{f}$ defines a linear isometry from $S(M)$ into $C(K)$. By the Ascoli-Arzel\`a theorem, a subset $\mathcal F$ of $S(M)$ is relatively compact if, and only if, $\tilde{\mathcal F}$ is equicontinuous and bounded in $C(K)$. Clearly $\mathcal F$ is bounded in $S(M)$ if, and only if, $\tilde{\mathcal F}$ is bounded in $C(K)$. Moreover,
\[ |\tilde{f}(x,y)-\tilde{f}(s,t)|\leq ||f||_L \left\Vert \frac{\delta_x-\delta_y}{d(x,y)}-\frac{\delta_s-\delta_t}{d(s,t)}\right\Vert\]
for every $f\in \mathcal F$. Thus, if $\mathcal F$ is bounded then $\tilde{\mathcal F}$ is equicontinuous at each $(x,y)\in M\times M\setminus \Delta$. Therefore it suffices to show that $\mathcal F$ satisfies $S(M)$-condition uniformly if an only if $\tilde{\mathcal F}$ is equicontinuous at $\infty$. That follows from the fact that the family $\mathcal U = \{U_{r,\delta}\}_{r>0,\delta>0}$ is a neighbourhood basis of $\infty$ in $K$, where \[U_{r,\delta} = K\setminus(\overline{B}(0,r)\times\overline{B}(0,r)\cap\{(s,t): d(s,t)\geq\delta\}),\]
which finishes the proof.\end{proof}

Relative compactness condition given by the above Proposition will be the key for proving the following Theorem.

\begin{theorem}\label{scomp} 

Let $M$ be a proper pointed metric space. Then $S(M,X)$ is linearly isometrically isomorphic to $K_{w^*,w}(X^*,S(M))$.

\end{theorem}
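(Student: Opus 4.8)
The plan is to identify $S(M,X)$ with a space of operators acting on $\mathcal F(M)$ and then transfer everything to $X^*$ via adjoints. First I would recall that $Lip(M,X^*) = L(\mathcal F(M), X^*)$, so each $f \in Lip(M,X^*)$ corresponds to a bounded operator $T_f : \mathcal F(M) \to X^*$ with $T_f(\delta_m) = f(m)$, and this correspondence is a linear isometry. Dualizing, $T_f^* : X^{**} \to \mathcal F(M)^* = Lip(M)$; restricting to $X \subseteq X^{**}$ gives an operator $R_f := T_f^*|_X : X \to Lip(M)$, and one checks that for $x \in X$ and $m \in M$ one has $R_f(x)(m) = \langle \delta_m, T_f^* x\rangle = \langle T_f \delta_m, x\rangle = f(m)(x)$. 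The key computation is then that $\|R_f\| = \|T_f\| = \|f\|$ (the norm is not lost on restricting to $X$ because $B_X$ is $w^*$-dense in $B_{X^{**}}$ and $T_f^*$ is $w^*$-$w^*$ continuous), and that $R_f$ takes values in $S(M)$ and is $w$-$w^*$... wait, we want $K_{w^*,w}(X^*,S(M))$, so actually I should set up the correspondence on $X^*$ directly: regard $f$ as inducing $\Phi_f : X^* \to S(M)$.

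Concretely, I would define, for $f \in S(M,X)$ and $x^* \in X^*$, the function $\Phi_f(x^*) : M \to \real$ by $\Phi_f(x^*)(m) = x^*(f(m))$. The main steps are: (1) show $\Phi_f(x^*) \in S(M)$ with $\|\Phi_f(x^*)\| \le \|f\|\,\|x^*\|$ — immediate from $|x^*(f(x)) - x^*(f(y))| \le \|x^*\|\,\|f(x)-f(y)\|$, which also transfers the little-Lipschitz and flatness-at-infinity estimates uniformly; (2) show $\Phi_f$ is $w^*$-$w$ continuous — if $x^*_\alpha \to x^*$ weak-star in $X^*$ then $\Phi_f(x^*_\alpha)(m) \to \Phi_f(x^*)(m)$ pointwise, and since $\{\Phi_f(x^*_\alpha)\}$ is bounded in $S(M)$ and pointwise convergence on the (proper, hence $\sigma$-compact) space $M$ forces weak convergence in $S(M)$ (here I use that $S(M) \subseteq C_0$-type space via Proposition~\ref{relcomp}, or that $\mathcal F(M)$ is generated by the $\delta_m$); (3) show $\Phi_f$ is compact — this is exactly where Proposition~\ref{relcomp} does the work: the image $\Phi_f(B_{X^*})$ is bounded and satisfies the $S(M)$-condition uniformly precisely because $f \in S(M,X)$ gives uniform control of $\sup_{0<d(x,y)<\delta}\|f(x)-f(y)\|/d(x,y)$ and the analogous quantity at infinity, and these bounds survive applying any $x^* \in B_{X^*}$; (4) show $\|\Phi_f\| = \|f\|$ — the inequality $\le$ is (1), and $\ge$ follows by picking $x,y$ nearly attaining $\|f\|$ and $x^* \in B_{X^*}$ nearly norming $f(x)-f(y)$.

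For surjectivity and the isometry being onto, given $T \in K_{w^*,w}(X^*,S(M))$ I would define $f : M \to X^*$ by letting $f(m) \in X^* = (X^*{}^*|_?)$... more precisely, for fixed $m$, the map $x^* \mapsto T(x^*)(m) = \langle \delta_m, T x^*\rangle$ is weak-star continuous on $X^*$ (since $T$ is $w^*$-$w$ continuous and $\delta_m \in \mathcal F(M)$), hence given by evaluation at some $f(m) \in X$; so $f : M \to X$ with $T = \Phi_f$, and one checks $f(0) = 0$ and that $f \in S(M,X)$ by reading off the $S(M)$-condition for $\mathcal F = T(B_{X^*})$ via Proposition~\ref{relcomp}. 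I expect the main obstacle to be step (2)/(3), namely verifying carefully that $w^*$-$w$ continuity of $\Phi_f$ and its compactness are equivalent to — and genuinely follow from — the uniform $S(M)$-condition of $\{\Phi_f(x^*) : x^* \in B_{X^*}\}$; the cleanest route is to factor everything through the embedding $S(M) \hookrightarrow C(K)$ of Proposition~\ref{relcomp} and use the Ascoli–Arzelà characterization there, at which point compactness of $\Phi_f$ and pointwise-implies-weak convergence both become transparent, while the algebraic identification $f \leftrightarrow \Phi_f$ and the norm equality are routine.
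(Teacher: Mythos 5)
Your construction is the same as the paper's: the map $f\mapsto\Phi_f$ with $\Phi_f(x^*)=x^*\circ f$ is exactly the transpose $f\mapsto f^t$ used there, compactness of $\Phi_f$ is obtained from Proposition \ref{relcomp} in the same way, and your surjectivity argument (recovering $f(m)\in X$ from the weak-star continuity of $x^*\mapsto\langle\delta_m,Tx^*\rangle$ and then reading the $S(M,X)$-condition off the relatively compact set $T(B_{X^*})$) matches the paper's, except that you re-derive the identification $Lip(M,X)\cong L_{w^*,w^*}(X^*,Lip(M))$ rather than quoting it from \cite{jsv}. That is all fine.

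There is, however, one step whose justification as written does not work: the verification of $w^*$-$w$ continuity in your step (2). You argue that if $x^*_\alpha\to x^*$ weak-star, then $\Phi_f(x^*_\alpha)\to\Phi_f(x^*)$ pointwise and boundedly, and that this ``forces weak convergence in $S(M)$'' (or, in the $C(K)$ picture, that pointwise plus bounded implies weak). For \emph{nets} this implication is false: in $C[0,1]$, indexing by finite subsets $F\subseteq[0,1]$ ordered by inclusion, one can choose $g_F\in B_{C[0,1]}$ vanishing on $F$ with $\int_0^1 g_F\geq 1/2$; the net $(g_F)$ converges pointwise to $0$ but not weakly. Since $B_{X^*}$ with the weak-star topology is in general not metrizable, you cannot reduce to sequences, so the step genuinely needs another argument. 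The missing ingredient is precisely the compactness you establish in step (3): because $\overline{\Phi_f(B_{X^*})}$ is norm-compact, every coarser Hausdorff topology on it (in particular the topology of pointwise convergence, equivalently the weak-star topology inherited from $Lip(M)=\mathcal F(M)^*$) coincides with the norm topology; hence pointwise convergence of $\Phi_f(x^*_\alpha)$ within this set already gives norm, a fortiori weak, convergence. This is exactly how the paper argues, followed by an appeal to \cite[Proposition 3.1]{Kim} to pass from $w^*$-$w$ continuity on $B_{X^*}$ to membership in $K_{w^*,w}(X^*,S(M))$. So reorder your steps: prove relative compactness of the image first, and deduce the continuity from it, rather than the other way around. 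With that repair the proposal is complete and coincides with the paper's proof.
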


\begin{proof}  It is shown in \cite{jsv} that $f\mapsto f^t$ defines a linear isometry from $Lip(M,X)$ onto $L_{w^*,w^*}(X^*, Lip(M)))$, where $f^t(x^*)=x^*\circ f$. Let $f\in S(M,X)$. Notice that for any $x\neq y\in M$ and $x^*\in X^*$ we have
\[ \frac{|x^*\circ f(x)-x^*\circ f(y)|}{d(x,y)} \leq ||x^*|| \frac{\Vert f(x)-f(y)\Vert}{d(x,y)} \]
thus $f^t(x^*)\in S(M)$. Moreover, previous inequality proves that the functions in $f^t(B_{X^*})$ satisfy the $S(M)$-condition uniformly. By Proposition \ref{relcomp} we get that $f^t(B_{X^*})$ is a relatively compact subset of $S(M)$ and thus $f^t\in K(X^*, S(M))\cap L_{w^*,w^*}(X^*, Lip(M)))$. The set $\overline{f^t(B_{X^*})}$ is norm-compact and thus every coarser Hausdorff topology agrees on it with the norm topology. In particular, the weak topology of $S(M)$ agrees on $f^t(B_{X^*})$ with the inherited weak-star topology of $Lip(M)$. Thus $f^t|_{ B_{X^*}}\colon B_{X^*} \to S(M)$ is $w^*-w$ continuous. By \cite[Proposition 3.1]{Kim} we have that $f^t\in K_{w^*,w}(X^*,S(M))$.

It only remains to prove that the isometry is onto. For this take $T\in K_{w^*,w}(X^*,S(M))$. We claim that $T$ is $w^*-w^*$ continuous from $X^*$ to $Lip(M)$. Indeed, assume that $\{x_\alpha^*\}$ is a net in $X^*$ weak-star convergent to some $x^*\in X^*$. As every $\gamma\in \mathcal F(M)$ is also an element in $S(M)^*$ then $\langle\gamma, Tx^*_\alpha\rangle$ converges to $\langle\gamma, Tx^*\rangle$. Thus, $T\in L_{w^*,w^*}(X^*, Lip(M)))$. By the isometry described above, there exists $f\in Lip(M,X)$ such that $T=f^t$. Since $f^t(B_{X^*})$ is relatively compact, given $\varepsilon>0$ there exist $r>0$ and $\delta>0$ such that 
\[ \sup_{0<d(x,y)<\delta} \frac{| x^*\circ f(x)-x^*\circ f(y)|}{d(x,y)} < \varepsilon, \ \sup_{\stackrel{x \text{ or } y\notin B(0,r)}{x\neq y}}\frac{| x^*\circ f(x)-x^*\circ f(y)|}{d(x,y)} < \varepsilon\]
for each $x^*\in B_{X^*}$. Taking supremum with $x^*\in B_{X^*}$, we get that 
\[ \sup_{0<d(x,y)<\delta} \frac{\Vert f(x)-f(y)\Vert}{d(x,y)} < \varepsilon, \ \sup_{\stackrel{x \text{ or } y\notin B(0,r)}{x\neq y}}\frac{\Vert f(x)-f(y)\Vert}{d(x,y)}, < \varepsilon\]
so $f\in S(M, X)$. Consequently, the isometry is onto and we are done.
\end{proof}

Notice that $X\otimes S(M)$ is a subspace of $K_{w^*,w}(X^*,S(M))$. By Propositions \ref{estabiluasqespaope} and \ref{smfindimouasq} we get the desired result on non duality in vector-valued case. 

\begin{theorem}\label{UASQsmvectorvaluado}

Let $M$ be a proper pointed metric space and let $X$ be a Banach space. If $S(M)$ is infinite dimensional, then $S(M,X)$ is unconditionally ASQ. In particular, it is not a dual Banach space. In particular, the same results holds for $lip(M,X)$ when $M$ is compact.

\end{theorem}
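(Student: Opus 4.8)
The plan is to assemble three facts already in hand: the isometric identification $S(M,X)\cong K_{w^*,w}(X^*,S(M))$ of Theorem \ref{scomp}, the operator-space stability of unconditional almost squareness in Proposition \ref{estabiluasqespaope}, and the dichotomy of Proposition \ref{smfindimouasq}, which guarantees that $S(M)$ is itself UASQ once it is infinite-dimensional.

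First, assume $X\neq\{0\}$ (otherwise there is nothing to prove). Since $M$ is proper and $S(M)$ is infinite-dimensional, Proposition \ref{smfindimouasq} gives that $S(M)$ is UASQ. By Theorem \ref{scomp} it suffices to show that $H:=K_{w^*,w}(X^*,S(M))$ is UASQ. Now $H$ is a subspace of $K(X^*,S(M))$, and for $g\in S(M)$ and $x\in X$ the elementary tensor $g\otimes x$, regarded as the rank-one operator $x^*\mapsto x^*(x)\,g$ on $X^*$, is compact and $w^*$-to-norm continuous (the functional $x^*\mapsto x^*(x)$ is $w^*$-continuous and $\lambda\mapsto\lambda g$ is norm continuous), hence $w^*$-$w$ continuous; therefore $S(M)\otimes X\subseteq H$. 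Applying Proposition \ref{estabiluasqespaope} with $S(M)$ playing the role of the UASQ space, $X$ the role of the auxiliary factor, and $H$ as above, we conclude that $H$ is UASQ, and hence so is $S(M,X)$.

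The non-duality assertion is then immediate: by Theorem \ref{uncondASQnodual} no dual Banach space can be UASQ, so $S(M,X)$ is not linearly isometric to any dual space. Finally, if $M$ is compact it is bounded, so for $r$ large enough $B(0,r)=M$ and the flatness-at-infinity clause separating $S(M,X)$ from $lip(M,X)$ is vacuous; thus $S(M,X)=lip(M,X)$ and $S(M)=lip(M)$ isometrically, and the argument above (under the corresponding hypothesis that $lip(M)$ is infinite-dimensional) shows that $lip(M,X)$ is UASQ and not a dual space.

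I expect the only point requiring care is the correct matching of the hypotheses of Proposition \ref{estabiluasqespaope}: one has to make sure the elementary tensors land inside $K_{w^*,w}(X^*,S(M))$ and not merely inside $K(X^*,S(M))$, and that the roles of the two Banach spaces in that proposition are assigned as $S(M)$ (the UASQ factor) and $X$ (the tensor factor whose dual is the domain). Both checks are routine, so no genuine obstacle remains.
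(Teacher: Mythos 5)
Your proposal is correct and follows exactly the paper's (very terse) argument: identify $S(M,X)$ with $K_{w^*,w}(X^*,S(M))$ via Theorem \ref{scomp}, note that the elementary tensors $S(M)\otimes X$ land in this space, and then combine Proposition \ref{smfindimouasq} (giving that $S(M)$ is UASQ) with the stability result of Proposition \ref{estabiluasqespaope} and the non-duality Theorem \ref{uncondASQnodual}. Your explicit verification that rank-one operators $x^*\mapsto x^*(x)g$ are $w^*$-$w$ continuous, and your matching of the roles of the two factors in Proposition \ref{estabiluasqespaope}, are precisely the details the paper leaves implicit.
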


\section{Some remarks and open questions}\label{secopenproblems}

It is shown in \cite[Theorem 2.3]{blr2} that, given a Banach space $X$ which contains an isomorphic copy of $c_0$, there exists an equivalent ASQ norm in $X$. Following the construction of such a norm, it can be proved that $X$ is actually UASQ under this norm, taking as sequence the standard basis $\{e_n\}\subset c_0\subset X$. Thus, a Banach space admits an equivalent renorming to be UASQ if and only if it contains an isomorphic copy of $c_0$. So both ASQ and UASQ are equivalent properties under an isomorphic point of view. Moreover, Corollary \ref{uasqequiasqsep} shows that previous properties are isometrically equivalent in the separable case. Consequently, we pose the following open Question. 

\begin{question}\label{UASQASQequivalent}
Are UASQ and ASQ equivalent, at least, for dual Banach spaces?
\end{question}

A positive answer to above Question would imply that there is not any dual ASQ Banach space, which would solve an open problem posed in \cite{all} as well as in \cite{blr2}.

The results of Section \ref{secunifdisc} suggest the following question. 

\begin{question}
Is there any (unbounded) uniformly discrete metric space $M$ such that $Lip(M)$ is ASQ?
\end{question}

If there existed such a metric space $M$, it would solve by the positive the problem of the existence of dual ASQ Banach spaces and, consequently, would solve by the negative Question \ref{UASQASQequivalent}.

Finally, note that Theorem \ref{UASQsmvectorvaluado} given a criterion for non duality in an isometrical way. So, we can go further and wonder:

\begin{question}
Let $M$ be a pointed proper metric space such that $S(M)$ is infinite dimensional and let $X$ be a non-separable Banach space. Can be $S(M,X)$ be isomorphic to any dual Banach space?
\end{question}

\textbf{Acknowledgements}: This work was done when both authors visited the Laboratoire de Math\'ematiques de Besan\c{c}on, for which the first author was supported by a grant from Programa de Contratos Predoctorales (FPU) de la Universidad de Murcia and the second author was supported by a grant from Vicerrectorado de Internacionalizaci\'on y Vicerrectorado de Investigaci\'on y Transferencia de la Universidad de Granada, Escuela Internacional de Posgrado de la Universidad de Granada y el Campus de Excelencia Internacional (CEI) BioTic.

The authors are deeply grateful to the Laboratoire de Math\'ematiques de Besan\c{c}on for their hospitality during the stay. They also thank to M. Doucha, G. Lancien, G. L\'opez-P\'erez and  A. Proch\'azka for useful conversations which helped to improve the paper.

\end{document}